\DeclareMathAlphabet\mathbfcal{OMS}{cmsy}{b}{n}
\newcommand{\BEAS}{\begin{eqnarray*}}
\newcommand{\EEAS}{\end{eqnarray*}}
\newcommand{\BEA}{\begin{eqnarray}}
\newcommand{\EEA}{\end{eqnarray}}
\newcommand{\BEQ}{\begin{equation}}
\newcommand{\EEQ}{\end{equation}}
\newcommand{\BIT}{\begin{itemize}}
\newcommand{\EIT}{\end{itemize}}
\newcommand{\BNUM}{\begin{enumerate}}
\newcommand{\ENUM}{\end{enumerate}}
\newcommand{\BA}{\begin{array}}
\newcommand{\EA}{\end{array}}
\newcommand{\argmin}{\mathop{\rm argmin}}
\newcommand{\idm}{I}
\newcommand{\rb}{\mathbb{R}}
\newcommand{\BlackBox}{\rule{1.5ex}{1.5ex}}  
\newenvironment{proof}{\par\noindent{\bf Proof\ }}{\hfill\BlackBox\\[2mm]}
\newtheorem{lemma}{Lemma}
\newtheorem{theorem}{Theorem}
\newtheorem{proposition}{Proposition}
\newtheorem{corollary}{Corollary}
\newcommand{\eq}[1]{Eq.~(\ref{eq:#1})}
\def \E{{\mathbb E}}
\def \Y{{\mathcal Y}}
\newcommand{\punt}[1]{}
\newtheorem{cor}{Corollary}[section]
\def\argmin{\mathop{\rm arg\,min}}
\newcommand{\reals}{\mathbb{R}}
\def\argmin{\mathop{\rm arg\,min}}
\newcommand{\vx}{\mathbf{x}}
\newcommand{\vtheta}{\bm{\theta}}
\newcommand{\vz}{\mathbf{z}}
\newcommand{\bq}{\begin{equation}}
\newcommand{\ba}{\begin{eqnarray}}
\newcommand{\ea}{\end{eqnarray}}
\def\R{{\reals}}
\newcommand{\remove}[1]{}
\title{Explicit Regularization of Stochastic Gradient Methods\\ through Duality}
\author{Anant Raj \thanks{The work was done when Anant Raj was visiting Inria.} \\
  MPI for Intelligent Systems,\\
  T\"ubingen, Germany.\\
  \texttt{anant.raj@tuebingen.mpg.de} 
\and Francis Bach \\
  Inria, Ecole Normale Sup\'erieure \\
  PSL Research University, Paris, France. \\
  \texttt{francis.bach@inria.fr} \\
}
\begin{document}

\maketitle

\begin{abstract}
 We consider stochastic gradient methods under the interpolation regime where a perfect fit can be obtained (minimum loss at each observation). While previous work  highlighted the implicit regularization of such algorithms, we consider an explicit regularization framework as a minimum Bregman divergence convex feasibility problem. Using convex duality, we propose randomized Dykstra-style algorithms based on randomized dual coordinate ascent. For non-accelerated coordinate descent, we obtain an algorithm which bears strong similarities with (non-averaged) stochastic mirror descent on specific functions, as it is is equivalent for quadratic objectives, and equivalent in the early iterations for more general objectives. It comes with the benefit of an explicit convergence theorem to a minimum norm solution.  For accelerated coordinate descent, we obtain a new algorithm that has better convergence properties than existing stochastic gradient methods in the interpolating regime. This leads to accelerated versions of the perceptron for generic $\ell_p$-norm regularizers, which we illustrate in experiments. 
\end{abstract}

\section{Introduction}
With the recent advancement in machine learning and hardware research, the size and capacity of training models for machine learning tasks have been consistently increasing. For many model which is being widely used in practice, e.g.,  deep neural networks \citep{Goodfellow-et-al-2016} and non-parametric regression models~\citep{belkin2018does,liang2018just}, the training process achieves zero error, which means that such models are expressive enough to interpolate the training data completely. Hence, it is important  to understand the interpolation regime to improve the training and prediction of such complex and over parameterized models used in  machine learning. 

It is a well known fact that regularization, either explicit or implicit, plays a crucial role in achieving better generalization. While Tikhonov regularization is amongst the most famous form of regularization~\citep{golub1999tikhonov,weese1993regularization} for linear or non-linear problems, several other methods can induce regularization in form of computational regularization when training machine learning models~\citep{yao2007early,rudi2015less,srivastava2014dropout}. Apart from explicitly induced regularization in machine learning models, optimization algorithms like (stochastic) gradient descent which is widely used in practice while training large machine learning models, also induce implicit regularization in the obtained solution. In many cases, (stochastic) gradient descent converges to minimum Euclidean norm solutions. Recent series of papers~\citep{soudry2018implicit,gunasekar2018characterizing,kubo2019implicit,arora2019implicit}  present result about introducing implicit regularization/bias by (stochastic) gradient descent in different set of convex and non-convex problems. 

In this paper, we address the following question: instead of relying on implicit regularization properties of stochastic algorithms, can we introduce an \emph{explicit} regularization/bias while training over-parameterized models in the interpolation regime?

In optimization terms, the interpolation regime corresponds to the minimization of an average of finitely many functions of the form
$$ F(\theta) = \frac{1}{n} \sum_{i=1}^n f_i(\theta)$$ with respect to $\theta \in \R^d$, where there is a global minimizer of $F$, which happens to be a global minimizer of \emph{all} functions~$f_i$, for  $i \in \{1,\dots,n\}$ (instead of only minimizing their average). In the interpolation regime, we are thus looking for a point $\theta \in \R^d$ in the intersection of all sets of minimizers $$\mathcal{K}_i = \arg\min_{\eta \in \rb^d} f_i(\eta),$$ for  all $i \in\{1,\dots,n\}$. 

We can thus explicitly regularize the problem by solving the following optimization problem:
\begin{align}
\min_{ \theta \in \rb^d} \psi(\theta) \mbox{ such that } \forall i \in \{1,\dots,n\}, \ \theta \in \mathcal{K}_i, \label{eq:general_dual_formulation}
\end{align}
where $\psi$ is a regularization function (typically a squared norm). In the reformulated problem given in Eq.~\eqref{eq:general_dual_formulation}, explicit regularization can be induced in the solution via the structure of the function $\psi$.  Note also that the above problem can be seen as problem of generalized projection onto sets, which are convex if the original functions $f_i$'s are convex, which we assume throughout this paper.

To address the problem defined in Eq.~\eqref{eq:general_dual_formulation}, we use the tools from convex duality and accelerated randomized coordinate ascent, which results in Dykstra-style projection algorithms  \citep{boyle1986method,zhang2008successive,gaffke1989cyclic}.  In this paper, we make the following contributions:

\BIT
\vspace{-1mm}
\item[(a)] We provide  a generic inequality going from dual guarantees in function values to primal guarantees in terms of Bregman divergences of iterates.
\vspace{-1mm}
\item[(b)] For non-accelerated coordinate ascent, we obtain an algorithm which bears strong similarities with (non-averaged) stochastic mirror descent on specific functions $f_i$'s. Our algorithm comes with the benefit of an explicit convergence theorem to a minimum value of the regularizer.
\vspace{-1mm}
\item[(c)] For accelerated coordinate ascent, we obtain a new algorithm that has better convergence properties than existing stochastic gradient methods in the interpolating regime.
\vspace{-1mm}
\item[(d)] This leads to accelerated versions of the perceptron for generic $\ell_p$-norm regularizers (this is already an improvement for the $\ell_2$-regularizer).
\vspace{-1mm}
\EIT

\subsection{Related work}

\paragraph{Stochastic gradient methods.}
First order stochastic gradient based iterative approaches \citep{nemirovski2009robust,duchi2011adaptive,kingma2014adam,defazio2014saga,ward2019adagrad} are the most efficient methods to perform optimization for machine learning problems with large datasets. There has been a large amount of work done in the area of  stochastic first order optimization  methods \citep[see, e.g.,][and references therein]{polyak1990new,polyak1992acceleration,nemirovski2009robust,moulines2011non} since the original stochastic approximation approach was proposed  by \citet{robbins1951stochastic}.

\paragraph{Primal SGD in the interpolation regime.} To address the optimization problem in the interpolation regime,  \citet{vaswani2018fast} provide faster convergence rates for first order stochastic methods in the Euclidean geometry. They propose a  {strong growth condition},    and a more widely applicable  {weak growth condition},   under which stochastic gradient descent algorithm achieves fast convergence rate while using constant learning rate (a side contribution of our paper is to extend the latter algorithm to stochastic mirror descent). \citet{vaswani2019painless} propose to use line-search to  set the step-size while training over-parameterized models which can fit completely to data.  Several other works propose to use constant learning rate for stochastic gradient  methods \citep{ma2017power,bassily2018exponential,liu2018accelerating,cevher2019linear} while training extremely expressive models which interpolate. However, all of the above mentioned works are primal-based algorithms.  

\paragraph{Dysktra's projection algorithms.} Dykstra-type projection algorithms \citep{boyle1986method,gaffke1989cyclic} are simple modifications of the classical alternating projections methods~\citep{vonfunctional,halperin1962product} to project on the intersection of convex sets. A key interpretation is the connection between Dykstra’s algorithm and block coordinate ascent~\citep{bauschke2015projection,bauschke2011convex,tibshirani2017dykstra}, which we use in this paper. \citet{chambolle2017accelerated} provides accelerated rates for Dykstra projection algorithm when projecting on the intersection of two sets.

\paragraph{Coordinate descent.} Coordinate descent has a long history in the optimization literature \citep{tseng1987relaxation,tseng1993dual,tseng2001convergence}. Rates for accelerated randomized coordinate descent were first proved by \citet{nesterov2012efficiency}. Since then, various extensions of the accelerated coordinate descent including proximal accelerated coordinate descent and non-uniform sampling have been proposed by \citet{lin2015accelerated,allen2016even,nesterov2017efficiency,hendrikx2019accelerated}.  Dual coordinate ascent can also be used to solve regularized empirical risk minimization problem ~\citep{shalev2013stochastic,shalev2014accelerated}. We recover some of their results as a by-product in this paper.

\paragraph{Perceptron.} The perceptron is one of the oldest machine learning algorithms \citep{block1962perceptron,minsky2017perceptrons}. Since then, there has been a lot of work on theoretical and empirical foundations of perceptron algorithms \citep{freund1999large,shalev2005new,tsampouka2005analysis}, in particular, with related extensions to ours, to $\ell_p$-norm perceptron through mirror maps~\citep{grove2001general,kivinen2003online}. However, none of the above mentioned work forces structure to the optimal solution in an explicit way.

\section{Optimization Algorithms for Finite Data}
We consider the finite data setting, that is, we will give bounds on training objectives (or distances to the minimum norm interpolator on the training set). We thus consider the problem:
\begin{align}
\min_{\theta \in \rb^d} \Psi(\theta) \mbox{ such that }  \forall i \in \{1,\dots,n\}, \ x_i^\top \theta \in \mathcal{Y}_i, \label{eq:just_primal}
\end{align}
where:
\vspace{-1mm}
\BIT
\vspace{-1mm}
\item Regularizer / mirror map: $\psi: \rb^d \to \rb \cup \{ +\infty\}$ is a differentiable $\mu$-strongly convex function with respect to some norm $\| \cdot \|$ (which is not in general the $\ell_2$-norm). We will consider in this paper the associated Bregman divergence \citep{bregman1967relaxation} defined as 
$$D_\Psi(\theta,\eta) = \psi(\theta)-\psi(\eta)-\psi'(\eta)^\top(\theta - \eta).$$

\vspace{-1mm}
\item Data:  $x_i \in \rb^{d \times k}$, $\mathcal{Y}_i \subset \rb^k$ are closed convex sets, for $ i \in \{1,\dots,n\}$.
\vspace{-1mm}
\item Feasibility / interpolation regime: we make the assumption that there exists $\theta \in \rb^d$ such that $\psi(\theta) < 
\infty$ and $\forall i \in \{1,\dots,n\}, \ x_i^\top \theta \in \mathcal{Y}_i$.
\vspace{-1mm}
\EIT
 
This is a general formulation that includes any set $\mathcal{K}_i$ like in the introduction (by having $k=d$, $x_i = \idm$, and $\mathcal{Y}_i = \mathcal{K}_i$), with an important particular case $k=1$ (classical linear prediction).

In this paper, we consider primarily the $\ell_p$-norm set-up, where  $\psi(\theta) = \frac{1}{2} \|\theta\|_p^2$ for $p \in (1,2]$, which is $(p-1)$-strongly convex with respect to the $\ell_p$-norm \citep{ball1994sharp,duchi2010composite}. The simplex with the entropy mirror map, which is $1$-strongly convex with respect to the $\ell_1$-norm, could also be considered.

\subsection{From dual guarantees to primal guarantees}

 We can use Fenchel duality to obtain a dual problem for the problem given in Eq.\eqref{eq:just_primal}. We will need the support function $\sigma_{\Y_i}$ of the convex set $\Y_i$, defined as, for $\alpha_i \in \rb^k$ \citep{boyd2004convex}, 
 $$
 \sigma_{\mathcal{Y}_i}(\alpha_i) = \sup_{ y_i \in \Y_i} y_i^\top \alpha_i.
 $$
 We have, by Fenchel duality:
 \begin{eqnarray}
  & &  \min_{\theta \in \rb^d} \psi(\theta) \mbox{ such that }  \forall i \in \{1,\dots,n\}, \ x_i^\top \theta \in \mathcal{Y}_i \label{eq:primal_eq} \\
  & = & \min_{\theta \in \rb^d}  \psi(\theta) + \frac{1}{n} \sum_{i=1}^n \max_{\alpha_i \in \rb^k} \Big\{  \alpha_i^\top x_i^\top \theta - \sigma_{\mathcal{Y}_i}(\alpha_i) \Big\} \notag
\\
&  =&\max_{\forall i, \ \alpha_i \in \rb^k} - \frac{1}{n} \sum_{i=1}^n  \sigma_{\mathcal{Y}_i}(\alpha_i)  - \psi^\star\Big(
-\frac{1}{n} \sum_{i=1}^n x_i \alpha_i \label{eq:dual_eq}
\Big),
 \end{eqnarray}
 with, at optimality, 
 $$\theta^\star = \theta(\alpha^\star) = \nabla \psi^\star \Big(\displaystyle 
- \frac{1}{n} \sum_{i=1}^n x_i \alpha_i
\Big).$$ We denote by $G(\alpha)$ the dual objective function above. With our assumptions of feasibility and strong-convexity of~$\psi$, there is a unique minimizer $\theta^\star \in \rb^d$. The dual problem is bounded from above, and we assume that there exists a maximizer $\alpha^\star \in \rb^{n \times k}$.

In this paper, we will consider dual algorithms to solve the problem disccused earlier in this section, that naturally leads to guarantees on 
$ {\rm gap}(\alpha ) = G(\alpha^\star) - G(\alpha)$. Our first result is to provide some primal guarantees from $\theta(\alpha)$.

\begin{proposition} \label{prop:gen_result_primal_dual}
With our assumption, for any $\alpha \in \rb^{n \times k}$, we have:
$$
D_\Psi( \theta^\star, \theta(\alpha ))  \leqslant {\rm gap}(\alpha ). \label{eq:statement_1_eq}
$$
\end{proposition}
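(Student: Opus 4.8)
The plan is to reduce the claimed inequality to the nonnegativity of a Fenchel--Young gap, exploiting that $\theta(\alpha)$ is \emph{defined} as $\nabla\psi^\star(\beta)$ with $\beta := -\frac{1}{n}\sum_{i=1}^n x_i\alpha_i$. First I would record two elementary consequences of conjugacy at the point $\beta$: since $\theta(\alpha)=\nabla\psi^\star(\beta)$, the Fenchel--Young equality holds, $\psi(\theta(\alpha)) + \psi^\star(\beta) = \beta^\top\theta(\alpha)$, and dually $\psi'(\theta(\alpha)) = \beta$. I would also invoke strong duality (the chain of equalities \eqref{eq:primal_eq}--\eqref{eq:dual_eq}, valid under the feasibility assumption), which gives $G(\alpha^\star) = \psi(\theta^\star)$.

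The first main computation simplifies the Bregman divergence. Substituting $\psi'(\theta(\alpha))=\beta$ and $\psi(\theta(\alpha)) = \beta^\top\theta(\alpha) - \psi^\star(\beta)$ into the definition $D_\Psi(\theta^\star,\theta(\alpha)) = \psi(\theta^\star) - \psi(\theta(\alpha)) - \psi'(\theta(\alpha))^\top(\theta^\star - \theta(\alpha))$, the two terms involving $\theta(\alpha)$ cancel, leaving the clean identity
$$D_\Psi(\theta^\star, \theta(\alpha)) = \psi(\theta^\star) + \psi^\star(\beta) - \beta^\top\theta^\star.$$
That is, the Bregman divergence is exactly the Fenchel--Young gap between $\theta^\star$ and $\beta$.

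Next I would expand the dual gap. Using $G(\alpha^\star)=\psi(\theta^\star)$ and the explicit form of $G$, one gets ${\rm gap}(\alpha) = \psi(\theta^\star) + \frac{1}{n}\sum_{i=1}^n\sigma_{\mathcal{Y}_i}(\alpha_i) + \psi^\star(\beta)$. Subtracting the previous identity and recalling $\beta^\top\theta^\star = -\frac{1}{n}\sum_i \alpha_i^\top x_i^\top\theta^\star$ yields
$$ {\rm gap}(\alpha) - D_\Psi(\theta^\star, \theta(\alpha)) = \frac{1}{n}\sum_{i=1}^n\Big(\sigma_{\mathcal{Y}_i}(\alpha_i) - \alpha_i^\top x_i^\top\theta^\star\Big).$$

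Finally, the only place where the problem structure enters is bounding each summand. Since $\theta^\star$ is primal feasible, $x_i^\top\theta^\star \in \mathcal{Y}_i$, so by the very definition of the support function, $\sigma_{\mathcal{Y}_i}(\alpha_i) = \sup_{y\in\mathcal{Y}_i} y^\top\alpha_i \geq (x_i^\top\theta^\star)^\top\alpha_i$, making every summand nonnegative and finishing the proof. The steps are essentially bookkeeping; I expect the only delicate points to be the sign and cancellation accounting in the Bregman computation, and correctly noticing that no optimality condition on $\alpha^\star$ is needed---feasibility of $\theta^\star$ alone drives the terminal inequality.
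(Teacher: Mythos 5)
Your proof is correct, and it takes a genuinely more direct route than the paper's. The paper proves Proposition~\ref{prop:gen_result_primal_dual} by first establishing a general result (Proposition~\ref{prop:dual_sub_opt_divergence_gen} in Appendix~\ref{app:primal-dual}) for the ERM-type pair \eqref{eq:primal_gen_erm}--\eqref{eq:dual_gen_erm} with generic smooth losses $\phi_i$: expanding ${\rm gap}(\alpha)$ via Fenchel--Young equalities at the pairs $(\theta^\star,\alpha^\star)$ and $(\theta(\alpha),\alpha)$, it reaches the exact identity ${\rm gap}(\alpha) = D_\Psi(\theta^\star,\theta(\alpha)) + \frac{1}{n}\sum_{i=1}^n D_{\phi_i^*}(\alpha_i,\alpha_i^\star)$ and drops the nonnegative dual Bregman terms, then specializes $\phi_i = \iota_{\mathcal{K}_i}$, $\phi_i^* = \sigma_{\mathcal{Y}_i}$ to get the stated proposition. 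Your residual term $\frac{1}{n}\sum_{i=1}^n\big(\sigma_{\mathcal{Y}_i}(\alpha_i) - \alpha_i^\top x_i^\top\theta^\star\big)$ is in fact identical to the paper's $\frac{1}{n}\sum_{i=1}^n D_{\phi_i^*}(\alpha_i,\alpha_i^\star)$, since at optimality $\sigma_{\mathcal{Y}_i}(\alpha_i^\star) = \alpha_i^{\star\top} x_i^\top\theta^\star$; the difference is how you certify its nonnegativity. You use only primal feasibility of $\theta^\star$ (the definition of the support function), whereas the paper invokes nonnegativity of a Bregman divergence built from $\nabla\phi_i^*(\alpha_i^\star)$ --- which is delicate here, because $\phi_i^* = \sigma_{\mathcal{Y}_i}$ is nonsmooth and the general proposition formally assumes smooth losses, forcing the sub-gradient caveat the paper mentions after the proposition statement. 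Your route sidesteps that entirely, and it isolates where optimality of $\alpha^\star$ enters: only through the strong-duality identity $G(\alpha^\star)=\psi(\theta^\star)$ given by the equality chain \eqref{eq:primal_eq}--\eqref{eq:dual_eq} (so your closing remark is accurate for the terminal inequality, though strong duality itself is of course an optimality statement). What the paper's longer detour buys in exchange is generality: the identity for smooth $\phi_i$ is exactly what lets it recover the SDCA and accelerated-SDCA guarantees in Appendix~\ref{app:SDCA}, which your specialized argument would not cover.
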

In the above statement, we also assume that $\psi$ is differentiable everywhere, since Bregman divergences are well defined for differentiable functions. However, if we want to relax the above statement for a general function $\psi$ which might not be differentiable, we would need to replace the term $D_\Psi( \theta^\star, \theta(\alpha )) $ in Eq.~\eqref{eq:statement_1_eq} with $\psi(\theta^\star) - \psi(\theta(\alpha ))) - \langle \partial \psi(\theta(\alpha ))), \theta^\star - \theta(\alpha )) \rangle$ where $\partial \psi(\theta(\alpha )))$ is a specific sub-gradient of $\psi$ at point $\theta(\alpha )$. In the proof of Proposition~\ref{prop:gen_result_primal_dual},  we simply use the duility structure of the problem with Fenchel-Young inequality. See the detailed proof in Appendix~\ref{app:primal-dual}.  

This result relates primal rate of convergence and dual rate of convergence, and holds true irrespective of the algorithm used to optimize the dual objective. Using it, we can recover  convergence guarantees for stochastic dual coordinate ascent (SDCA) \citep{shalev2013stochastic} and  accelerated SDCA~\citep{shalev2014accelerated}. Compared to their analysis, our result directly provides rates of convergence from existing results in coordinate descent, but in terms of primal iterates. Details are provided in Appendix~\ref{app:SDCA}.

 \subsection{Randomized coordinate descent}

 Given our relationship between primal iterate sub-optimality and dual sub-optimality gap ${\rm gap}(\alpha)$ for any dual variable $\alpha$ and its corresponding primal variable $\theta(\alpha)$, we can  leverage good existing algorithms on the dual problem. One such well known method is randomized dual coordinate descent, where $\alpha$ and thus $\theta(\alpha)$ will be random. 

 The algorithm is initialized with $\alpha^{(0)}_i=0$ for all $i \in \{1,\dots,n\}$, and at step $t>0$, an index $i(t) \in \{1,\dots,n\}$ is  selected uniformly (for simplicity) at random. The update for proximal randomized coordinate ascent~\citep{richtarik2014iteration} is obtained in the following lemma (whose proof is given in Appendix~\ref{app:coord_descent_update}).
\begin{lemma}\label{lem:RCD_update}
For any uniformly randomly selected coordinate $i(t)$ at time instance $t$, the update for randomized proximal coordinate ascent  is equal to
\begin{align*} 
\alpha_{i(t)} = \alpha^{(t-1)}_{i(t)} +  \frac{n}{L_{i(t)}} x_{i(t)} ^\top \theta(\alpha^{(t-1)})  -  \frac{n}{L_{i(t)}}\Pi_{\Y_i} \Big( \frac{L_{i(t)}}{n}  \alpha^{(t-1)}_{i(t)} +  x_{i(t)} ^\top \theta(\alpha^{(t-1)}) \Big),
\end{align*}
where $\Pi_{\Y_i}$ is the orthogonal projection on $\Y_i$, and $L_{i}$ is equal to
 $ \displaystyle
 L_{i} = \frac{1}{\mu} \| x_i\|_{ 2 \to \star }^2 = \frac{1}{\mu} \sup_{\| \beta_i\|_2 = 1} \| x_i \beta_i\|_\star^2$.
\end{lemma}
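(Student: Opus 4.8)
The plan is to derive the coordinate-ascent update by maximizing the dual objective $G(\alpha)$ over the block $\alpha_{i(t)}$ while holding all other blocks fixed, but using the proximal (quadratic-upper-bound) version rather than an exact block maximization. First I would isolate the dependence of $G$ on the single block $\alpha_i$ (writing $i = i(t)$ for brevity). From the dual expression in \eqref{eq:dual_eq}, the only $\alpha_i$-dependent terms are $-\frac{1}{n}\sigma_{\mathcal{Y}_i}(\alpha_i)$ and the composite term $-\psi^\star\!\big(-\frac1n\sum_j x_j\alpha_j\big)$. The key structural fact I would exploit is that $\psi$ is $\mu$-strongly convex with respect to $\|\cdot\|$, so its Fenchel conjugate $\psi^\star$ is $(1/\mu)$-smooth with respect to the dual norm $\|\cdot\|_\star$; this is the standard strong-convexity/smoothness duality.

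\textbf{Building the block Lipschitz constant.}
The second step is to translate this smoothness of $\psi^\star$ into a block-wise smoothness constant $L_i$ for the map $\alpha_i \mapsto G(\alpha)$. Perturbing $\alpha_i$ by $\delta$ changes the conjugate's argument by $-\frac1n x_i \delta$, and the $(1/\mu)$-smoothness of $\psi^\star$ gives a quadratic upper bound whose curvature scales like $\frac{1}{\mu n^2}\|x_i\delta\|_\star^2$. Maximizing $\|x_i\delta\|_\star^2$ over unit $\|\delta\|_2$ produces exactly the operator-norm quantity $\|x_i\|_{2\to\star}^2$, so the per-coordinate quadratic bound has curvature $L_i/n^2$ with $L_i = \frac1\mu\|x_i\|_{2\to\star}^2$. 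This is precisely the $L_i$ stated in the lemma, and it is what licenses a proximal step with effective step-size $n/L_i$.

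\textbf{Deriving the update via the proximal step.}
With the quadratic model in hand, the proximal randomized coordinate ascent update maximizes this model over $\alpha_i$, which amounts to a proximal operator of the support-function term $\sigma_{\mathcal{Y}_i}$. I would write the update as $\alpha_i \leftarrow \mathrm{prox}$ of $\frac{1}{n}\sigma_{\mathcal{Y}_i}$ with the gradient step coming from the linear part, where the relevant gradient is $-\nabla\psi^\star(\cdot) = -\theta(\alpha)$ evaluated at the current iterate, recalling $\theta(\alpha) = \nabla\psi^\star(-\frac1n\sum_j x_j\alpha_j)$ so that $\partial_{\alpha_i}$ of the composite term equals $\frac1n x_i^\top\theta(\alpha)$. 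The final calculational piece is the Moreau-style identity that relates the proximal operator of a support function $\sigma_{\mathcal{Y}_i}$ to the orthogonal projection $\Pi_{\mathcal{Y}_i}$: since $\sigma_{\mathcal{Y}_i}$ is the conjugate of the indicator of $\mathcal{Y}_i$, its prox is expressible through $\Pi_{\mathcal{Y}_i}$. Substituting this identity and collecting the step-size factors $n/L_i$ yields exactly the stated formula, with the projected quantity being $\frac{L_i}{n}\alpha_i^{(t-1)} + x_i^\top\theta(\alpha^{(t-1)})$.

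\textbf{Main obstacle.}
The step I expect to require the most care is the prox-to-projection identity for $\sigma_{\mathcal{Y}_i}$ together with correctly tracking all the $\frac1n$ and $\frac{L_i}{n}$ scaling factors through the Moreau decomposition; it is easy to misplace a factor of $n$ or to confuse which argument gets projected. Everything else (isolating the block, invoking smoothness of $\psi^\star$, computing the operator norm) is routine once the duality dictionary is set up, so the bookkeeping in this final substitution is where the real attention must go.
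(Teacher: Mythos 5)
Your proposal is correct and follows essentially the same route as the paper: form the proximal (quadratic-minorant) model of the dual objective in the block $\alpha_{i(t)}$ with curvature $L_{i(t)}/n^2$ coming from the $(1/\mu)$-smoothness of $\psi^\star$, reduce the subproblem to a prox of $\sigma_{\mathcal{Y}_{i(t)}}$, and convert that prox into the projection $\Pi_{\mathcal{Y}_{i(t)}}$. The only cosmetic difference is that you invoke the Moreau decomposition as a known identity, while the paper rederives it inline by dualizing the quadratic subproblem (writing it as a $\sup$ over $z$, swapping min and sup, and comparing optimality conditions); you also spell out the derivation of $L_i$, which the paper merely states.
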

Here, we implicitly assume that the individual projections on convex set $\mathcal{Y}_i$ for all $i \in \{1,\cdots,n \}$  are easy to compute, leading to Algorithm~\ref{alg:random_p_general}. For uniformly random selection of the datapoint $x_{i(t)}$ at time $t$, $L_i(t)$ can simply be replaced by $\max_i L_i$ in the algorithm. 


\begin{wrapfigure}{L}{0.5\textwidth}
\begin{minipage}{0.5\textwidth}
 \begin{algorithm}[H]
\DontPrintSemicolon

  \KwInput{$\alpha_0$, $\theta_0\gets \theta(\alpha_0)$ and $\vx_i,\mathcal{Y}_i$ for $i \in[n]$ .}

  \KwOutput{$\theta_{T+1}$ and $\alpha_{T+1}$}
  
\For{$t\gets1$ \KwTo $T$ }{
    Choose $i_t \in \{1,2,\cdots,n \}$ randomly. \\
    $\beta_{\text{(prev)}} =\alpha_{i(t)}^{(t-1)} $ \\
    $\zeta_t = \Pi_{\Y_i} \Big( \frac{L_{i(t)}}{n}  \alpha^{(t-1)}_{i(t)} +  x_{i(t)} ^\top \theta_{t-1} \Big) .$ \\
    $\alpha_{i(t)} = \alpha^{(t-1)}_{i(t)} +  \frac{n}{L_{i(t)}} x_{i(t)} ^\top \theta_{t-1}- \frac{n}{L_{i(t)}}\zeta_t.$ \\
    $\Delta_\beta = \alpha_{i(t)} -\beta_{\text{(prev)}}  .$ \\
    Update $\theta_{t+1} \gets \theta(\alpha_{t+1}) ~~~~~\{\text{Use~}\Delta_\beta, x_{i(t)}\}.$ 
    }
\caption{ Proximal Random Coordinate Ascent}\label{alg:random_p_general}
\end{algorithm}
\vspace{-1cm}
\end{minipage}
\end{wrapfigure}

  
Proximal randomized coordinate descent is a well studied problem~\citep{nesterov2017efficiency,richtarik2014iteration}, and has a known rate of convergence for smooth objective functions. The set of optimal solutions of the dual problem in Equation~\eqref{eq:dual_eq} is denoted by $A^\star$ and $\alpha^\star$ is an element of it.  Define, 
\begin{align*}
\mathcal{R}(\alpha) = \max_y \max_{\alpha^\star \in A^\star } \left\{ \| y - \alpha^\star \|~ :~ G(y) \geq G(\alpha)  \right\}.
\end{align*}

Since we assumed that $\psi$ is $\mu$-strongly convex,   $\psi^\star$ is $(\frac{1}{\mu})$-smooth, and we get
\begin{align}
\E \Big[D_\Psi( \theta^\star, \theta(\alpha^{(t)})) \Big] \leqslant \E\big[{\rm gap}(\alpha^{(t)})\Big] \notag \\
\leqslant \frac{ \max_{i} L_i}{t} \frac{\max\{\| \alpha^\star\|^2, \mathcal{R}(0)^2\}}{n}, \label{eq:rcd_rate}
\end{align}
where $L_i$ is defined in Lemma~\ref{lem:RCD_update}. 
The convergence rate given in Eq.~\eqref{eq:rcd_rate} can further be improved with non-uniform sampling based on the values $L_i$, and then $\max_{i}L_i$ can be replaced by $\frac{1}{n}\sum_{i=1}^nL_i$~\citep{richtarik2014iteration}. However, taking inspirations from \citep{cutkosky2019anytime,kavis2019unixgrad} the convergence for averaged iterate of coordinate descent when $\mathcal{Y}_i $ is a singleton set for all $i$ can be obtained which only depends on $\|\alpha^\star \|$.
 
\subsection{Relationship to least-squares}
We now discuss an important case of the above formulation when $\mathcal{Y}_i $ is a singleton set, i.e., $\mathcal{Y}_i = \{y_i\}$. This problem has been addressed recently by~\citet{calatroni2019accelerated} and we recover it as a special case of our general formulation. 

We will make a link with  least-squares in the interpolation regime, which can be written as a finite sum objective as follows,
 \begin{align}
 \min \left[\frac{1}{2n} \sum_{i=1}^n \| y_i - x_i^\top \theta \|_2^2
 = \frac{1}{2n} \sum_{i=1}^n d( x_i^\top \theta, \mathcal{Y}_i)^2\right]. \label{eq:least_square}
 \end{align}
 It turns out that primal stochastic mirror descent with constant step-size applied to Eq.~\eqref{eq:least_square} and our formulation provided in \Cref{eq:primal_eq,eq:dual_eq} are equivalent, as we now show.
 \begin{lemma}\label{lem:mirro_coord_dual}
 Consider the stochastic mirror descent updates using the mirror map $\psi$ for the least-squares problem provided in Eq.~\eqref{eq:least_square}. Then, the corresponding stochastic mirror descent updates converges to minimum $\psi$ solution. 
 \end{lemma}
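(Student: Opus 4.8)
The plan is to prove the statement by establishing an exact step-by-step equivalence between the stochastic mirror descent (SMD) iterates and the randomized dual coordinate ascent iterates of Lemma~\ref{lem:RCD_update}, and then to transport the dual convergence guarantee of Eq.~\eqref{eq:rcd_rate} to the primal through Proposition~\ref{prop:gen_result_primal_dual}.

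First I would write out the SMD update explicitly. Since $\Y_i = \{y_i\}$, the $i$-th loss in Eq.~\eqref{eq:least_square} is $f_i(\theta) = \frac{1}{2}\|y_i - x_i^\top \theta\|_2^2$, with gradient $\nabla f_i(\theta) = x_i(x_i^\top \theta - y_i)$. In its mirror (proximal) form, SMD with mirror map $\psi$, constant step-size $\gamma$, and index $i(t)$ sampled uniformly reads $\nabla \psi(\theta_t) = \nabla \psi(\theta_{t-1}) - \gamma\, x_{i(t)}\big(x_{i(t)}^\top \theta_{t-1} - y_{i(t)}\big)$. Next I would specialise the dual update to the singleton case. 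When $\Y_i = \{y_i\}$ the orthogonal projection satisfies $\Pi_{\Y_i} \equiv y_i$, so the increment in Lemma~\ref{lem:RCD_update} collapses to $\Delta_\beta = \alpha_{i(t)} - \alpha^{(t-1)}_{i(t)} = \frac{n}{L_{i(t)}}\big(x_{i(t)}^\top \theta_{t-1} - y_{i(t)}\big)$. Using the primal-dual link $\nabla \psi(\theta(\alpha)) = -\frac{1}{n}\sum_{i} x_i \alpha_i$, which follows from $\theta(\alpha) = \nabla \psi^\star(-\frac{1}{n}\sum_i x_i \alpha_i)$ together with $\nabla \psi^\star = (\nabla \psi)^{-1}$, updating only coordinate $i(t)$ moves the dual mirror variable by $-\frac{1}{n} x_{i(t)}\Delta_\beta$, giving $\nabla \psi(\theta_t) = \nabla \psi(\theta_{t-1}) - \frac{1}{L_{i(t)}} x_{i(t)}\big(x_{i(t)}^\top \theta_{t-1} - y_{i(t)}\big)$.

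Comparing the two displays shows they coincide exactly once we set $\gamma = 1/L_{i(t)}$ (or $\gamma = 1/\max_i L_i$ under the uniform-sampling simplification noted after Lemma~\ref{lem:RCD_update}); hence SMD with this constant step-size is \emph{literally} randomized dual coordinate ascent on the dual problem Eq.~\eqref{eq:dual_eq}. The two iterate sequences being identical, the SMD iterates inherit the bound Eq.~\eqref{eq:rcd_rate}, so $\E[{\rm gap}(\alpha^{(t)})] \to 0$; Proposition~\ref{prop:gen_result_primal_dual} then yields $\E[D_\Psi(\theta^\star, \theta_t)] \to 0$, and $\mu$-strong convexity of $\psi$ forces $\theta_t \to \theta^\star$, the minimum-$\psi$ interpolator.

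The main obstacle is the bookkeeping of this correspondence: verifying that a single coordinate step $\Delta_\beta$ produces \emph{exactly} the claimed increment of $\nabla \psi(\theta)$, and getting the step-size normalisation (the factor $n/L_{i(t)}$ in the dual versus $1/L_{i(t)}$ in the primal) right. A conceptual point worth flagging is \emph{why} this exact equivalence is special to the singleton/quadratic case: the projection term $\Pi_{\Y_i}\big(\frac{L_{i(t)}}{n}\alpha^{(t-1)}_{i(t)} + x_{i(t)}^\top \theta_{t-1}\big)$ loses its dependence on the stale dual coordinate $\alpha^{(t-1)}_{i(t)}$ precisely because $\Pi_{\Y_i}$ is constant on a singleton; for a general $\Y_i$ this dependence remains and the two updates no longer telescope identically, which is consistent with the weaker claim of equivalence only in the early iterations for general objectives.
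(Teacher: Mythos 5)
Your proposal is correct and follows essentially the same route as the paper: specialize the dual coordinate ascent update of Lemma~\ref{lem:RCD_update} to the singleton case $\mathcal{Y}_i=\{y_i\}$, use the link $\psi'(\theta(\alpha)) = -\frac{1}{n}\sum_i x_i\alpha_i$ to rewrite it as the mirror descent step with step-size $1/L_{i(t)}$, and invoke Eq.~\eqref{eq:rcd_rate} together with Proposition~\ref{prop:gen_result_primal_dual} for convergence to the minimum-$\psi$ interpolator. Your proof is in fact slightly more complete than the paper's, which compresses the final convergence step into ``Hence the result,'' and your closing remark on why the equivalence breaks beyond singletons matches the paper's own discussion following the lemma.
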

 \begin{proof}
 Consider the primal-dual formulation given in Eq.~\eqref{eq:primal_eq} and Eq.~\eqref{eq:dual_eq}, with  $\mathcal{Y}_i = \{y_i\}$. The randomized dual coordinate ascent has the following update rule:
\begin{align}
\alpha^{(t)}_{i(t)} =  \alpha^{(t-1)}_{i(t)} +  \frac{n}{L_{i(t)}} (x_{i(t)} ^\top \theta(\alpha^{(t-1)})   - y_{i(t)}  ). \label{eq:sto_mirror_least_square}
\end{align}
 From the first order optimality condition, the update in Eq.~\eqref{eq:sto_mirror_least_square} translates into, with $\theta^{(t)} = \theta(\alpha^{(t)})$,
 $$
 \psi'(\theta^{(t)}) =  \psi'(\theta^{(t-1)}) - \frac{1}{L_{i(t)}}  x_{i(t)}  ( x_{i(t)} ^\top \theta(\alpha^{(t-1)}) - y_{i(t)} ),
 $$
 which is exactly stochastic mirror descent on the least-squares objective  with mirror map $\psi$. Hence the result.
  \end{proof}
  The rate of convergence can be obtained by the use of  Eq.~\eqref{eq:rcd_rate}.

  \paragraph{General case (beyond singletons).} For any set $\Y_i$, if $\alpha_{i(t)}^{(t-1)}=0$, for example,  if $i(t)$ has never been selected, then, by Moreau's identity, we also get a stochastic mirror descent step for $\frac{1}{2n} \sum_{i=1}^n d( x_i^\top \theta, \mathcal{Y}_i)^2$. However, this is not true anymore when an index is selected twice.

\subsection{Accelerated coordinate descent}

In the previous sections, we discussed randomized coordinate dual ascent to optimize the problem in Eq.~\eqref{eq:primal_eq}. We can also consider accelerated proximal randomized coordinate ascent~\citep{lin2015accelerated,hendrikx2019accelerated,allen2016even}. For our problem, it leads to:
\begin{align}
\E\Big[D_\Psi( \theta^\star, \theta(\alpha^{(t)})) \Big] \leqslant \E\Big[{\rm gap}(\alpha^{(t)})\Big]
\leqslant  \frac{ 4  \max_{i} L_i}{t^2} \left\{ \frac{G(\alpha^\star) - G(0)}{ \max_{i} L_i} + \frac{1}{2}\| \alpha^\star\|^2\right\}. \label{eq:acc_rcd_rate}
\end{align}
We will use the bound in Eq.~\eqref{eq:acc_rcd_rate} to analyze the general perceptron in the next section. 
We also provide the proximal accelerated randomized coordinate ascent  algorithm \citep{lin2015accelerated,hendrikx2019accelerated} with uniformly random sampling of coordinates to optimize the dual objective of $\ell_p$-perceptron. However, the algorithm can easily be updated for the general case of Eqs.~\eqref{eq:primal_eq} and \eqref{eq:dual_eq}. 

\vspace{-2mm}
\begin{figure*}[!h]
\centering
 \begin{minipage}[t]{0.51\textwidth}
 \centering
\begin{algorithm}[H]
\DontPrintSemicolon

  \KwInput{$\alpha_0$, $\theta_0\gets \theta(\alpha_0)$, $x_i$ for $i \in[n]$ and $\mu=0$.}
  \textbf{Initialize: } $z_0 \gets \alpha_0 $, $\theta_{z_0}\gets \theta_0$, $v_0 \gets \alpha_0 $ and $\gamma_0\gets \frac{1}{n}.$ \;

  \KwOutput{$\theta_{T+1}$ and $\alpha_{T+1}$}
  
\For{$t\gets0$ \KwTo $T$ }{
    Choose $i_t \in \{1,2,\cdots,n \}$ randomly. \\
    $r_t = 1 - \theta_{z_t}^\top x_{i_t}$ \\
    $\alpha_{t+1} = u_{t+1} = \alpha_t + \frac{r_t}{n \gamma_t L_{i_t}} .$ \\
    $\alpha^{(t+1)}_{i(t)} = \max(\alpha^{(t+1)}_{i(t)},0).$ \\
    Update $\theta_{t+1}\gets \theta(\alpha_{t+1}).$~~~~~~~~~~~~~(Algorithm~\ref{alg:theta_update }) \\
    $\gamma_{t+1} = \frac{1}{2}\left( \sqrt{\gamma_t^4 + 4\gamma_t^2} - \gamma_t^2 \right).$\\
    $v_{t+1} = z_t + n\gamma_t (\alpha_{t+1} - \alpha_{t}).$ \\
    $z_{t+1} = (1-\gamma_{t+1})v_{t+1} + \gamma_{t+1} \alpha_{t+1}.$ \\
    Update $\theta_{z_{t+1}}\gets \theta(z_{t+1}).$~~~~~~~~~~~~~(Algorithm~\ref{alg:theta_z_t_update })
    }
\caption{ Accelerated Proximal Coordinate Ascent (Dual Perceptron)  \citep{lin2015accelerated,hendrikx2019accelerated}}\label{alg:accel_p_general}
\end{algorithm}
\end{minipage}
\hfill
\begin{minipage}[t]{0.43\textwidth}
\centering
\begin{algorithm}[H]
\DontPrintSemicolon

  \KwInput{$x_{i_t}$,$\alpha_{t+1}$ , $X^\top\alpha_t$, $\alpha_t$ and $i_t$ .}

  \KwOutput{$\theta_{t+1}$ and $X^\top\alpha_{t+1}$}
  $~~ X^\top\alpha_{t+1} = X^\top\alpha_{t} + (\alpha^{(t+1)}_{i(t)} - \alpha^{(t)}_{i(t)})x_{i_t}.$ \\
  $\qquad \qquad$ Compute $\theta_{t+1}$ from $X^\top\alpha_{t+1}.$
  
\caption{ Update $\vtheta_{t+1}$}\label{alg:theta_update }
\end{algorithm}
\begin{algorithm}[H]
\DontPrintSemicolon

  \KwInput{$x_{i_t}$, $\alpha_{t+1}$,   $X^\top\alpha_t$, $X^\top\alpha_{t+1}$, $X^\top z_{t}$, $\alpha_t$, $\gamma_t$,  $\gamma_{t+1}$ .}

  \KwOutput{$\vtheta_{\vz_{t+1}}$ and $X^\top z_{t+1}$}
 $ ~~X^\top v_{t+1} = X^\top z_t + n\gamma_t X^\top(\alpha_{t+1} - \alpha_t).$ \\
  $\qquad X^\top z_{t+1} = (1 - \gamma_{t+1}) X^\top v_{t+1}   +\gamma_{t+1} X^\top \alpha_{t+1} .$ \\
  $\qquad \qquad$ Compute $\theta_{z_{t+1}}$ from $X^\top z_{t+1}.$  
\caption{ Update $\vtheta_{\vz_{t+1}}$}\label{alg:theta_z_t_update }
\end{algorithm}
\end{minipage}
\end{figure*}

\subsection{Baseline: Primal Mirror Descent} \label{subsec:sharan_extension}
We will compare our dual algorithms to existing primal algorithms. They correspond to the minimization of 
\begin{equation}
    \label{eq:finite_sum}
F(\theta) =  \frac{1}{2n} \sum_{i=1}^n d( x_i^\top \theta, \mathcal{Y}_i)^2.
\end{equation}
\citet{vaswani2018fast} showed convergence of stochastic gradient descent for this problem. We extend their results to all mirror maps.
Mirror descent with the mirror map $\psi $ selects $i(t)$ at random and the iteration update is 
\begin{align}
\psi'(\theta^{(t)}) &= \psi'(\theta^{(t-1)})  - \gamma x_{i(t)} ( \Pi_{\mathcal{Y}_i}(  x_{i(t)}^\top \theta^{(t-1)} ) - x_{i(t)}^\top \theta^{(t-1)} ) . \label{eq:mirror_sharan}
\end{align}
Note that we have already encountered it in Lemma~\ref{lem:mirro_coord_dual}, for least-squares regression, where we provided a convergence rate on the final iterate.

In Theorem~\ref{thm:sharan_mirror_desc} below, we prove  an ${O}\left({1}/{t}\right)$ convergence rate for stochastic mirror descent update with mirror map $\psi$, for a constant step-size and the average iterate, directly extending the result of~\citet{vaswani2018fast} to all mirror maps.
\begin{theorem}\label{thm:sharan_mirror_desc}
Consider the stochastic mirror descent  update in Eq.~\eqref{eq:mirror_sharan} for the optimization problem in Eq.~\eqref{eq:finite_sum} with $\gamma = \mu / \sup_{i} \| x_i\|_{ 2 \to \star }^2$, the expected optimization error after~$t$  iterations the for averaged iterate $\bar{\theta}_t$ behaves as,
$$0 \leqslant \E [ F(\bar{\theta}^{(t)}) ] \leqslant \frac{\max_{i}  L_i}{  t} \psi(\theta^\star). $$
\end{theorem}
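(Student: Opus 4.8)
The plan is to recognize the update in \eqref{eq:mirror_sharan} as stochastic mirror descent (with mirror map $\psi$ and constant step-size $\gamma$) applied to $F(\theta)=\frac1n\sum_{i=1}^n f_i(\theta)$ with $f_i(\theta)=\frac12 d(x_i^\top\theta,\mathcal{Y}_i)^2$, since $\nabla f_i(\theta)=x_i\big(x_i^\top\theta-\Pi_{\mathcal{Y}_i}(x_i^\top\theta)\big)$. Two structural facts drive the argument. First, feasibility (interpolation) gives $x_i^\top\theta^\star\in\mathcal{Y}_i$, hence $f_i(\theta^\star)=0=\min f_i$ and $\nabla f_i(\theta^\star)=0$ for every $i$, so $\theta^\star$ minimizes each summand, not merely the average. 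Second, since the residual map $z\mapsto z-\Pi_{\mathcal{Y}_i}(z)$ is $1$-Lipschitz in $\ell_2$, one checks that $f_i$ is $\tilde L_i$-smooth with respect to $\|\cdot\|$, with $\tilde L_i=\|x_i\|_{2\to\star}^2=\mu L_i$.

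Next I would write the one-step mirror descent inequality. Writing $g_t=\nabla f_{i(t)}(\theta^{(t-1)})$, the three-point identity for $D_\Psi$ combined with the update $\psi'(\theta^{(t)})=\psi'(\theta^{(t-1)})-\gamma g_t$ gives, with comparator $\theta^\star$,
$$D_\Psi(\theta^\star,\theta^{(t)})=D_\Psi(\theta^\star,\theta^{(t-1)})-\gamma\langle g_t,\theta^{(t-1)}-\theta^\star\rangle+\gamma\langle g_t,\theta^{(t-1)}-\theta^{(t)}\rangle-D_\Psi(\theta^{(t)},\theta^{(t-1)}).$$
Using $\mu$-strong convexity, $D_\Psi(\theta^{(t)},\theta^{(t-1)})\ge\frac\mu2\|\theta^{(t)}-\theta^{(t-1)}\|^2$, and Young's inequality on the cross term, the last two terms are bounded by $\frac{\gamma^2}{2\mu}\|g_t\|_\star^2$.

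The crucial step is to control $-\gamma\langle g_t,\theta^{(t-1)}-\theta^\star\rangle+\frac{\gamma^2}{2\mu}\|g_t\|_\star^2$ via the refined consequence of smoothness and convexity, valid in a general norm: for an $\tilde L$-smooth convex $f$ minimized at $\theta^\star$ with $f(\theta^\star)=0$ and $\nabla f(\theta^\star)=0$, one has $\langle\nabla f(\theta),\theta-\theta^\star\rangle\ge f(\theta)+\frac{1}{2\tilde L}\|\nabla f(\theta)\|_\star^2$. Applied to $f_{i(t)}$ (with $\tilde L_{i(t)}=\mu L_{i(t)}$) this yields
$$D_\Psi(\theta^\star,\theta^{(t)})\le D_\Psi(\theta^\star,\theta^{(t-1)})-\gamma f_{i(t)}(\theta^{(t-1)})+\frac{\gamma}{2\mu}\Big(\gamma-\frac{1}{L_{i(t)}}\Big)\|g_t\|_\star^2.$$
With $\gamma=1/\max_i L_i\le 1/L_{i(t)}$ the last term is non-positive and drops, leaving the clean recursion $\gamma f_{i(t)}(\theta^{(t-1)})\le D_\Psi(\theta^\star,\theta^{(t-1)})-D_\Psi(\theta^\star,\theta^{(t)})$.

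To finish, I would take the conditional expectation over the uniform index $i(t)$, so that $\E[f_{i(t)}(\theta^{(t-1)})\mid\theta^{(t-1)}]=F(\theta^{(t-1)})$, sum over $t=1,\dots,T$ to telescope the Bregman divergences, and bound $D_\Psi(\theta^\star,\theta^{(0)})=\psi(\theta^\star)-\psi(\theta^{(0)})\le\psi(\theta^\star)$ using $\theta^{(0)}=\argmin\psi$. Dividing by $\gamma T=T/\max_i L_i$ and applying Jensen's inequality to the convex $F$ at the averaged iterate $\bar\theta^{(T)}$ gives $\E[F(\bar\theta^{(T)})]\le\frac{\max_i L_i}{T}\psi(\theta^\star)$, while the lower bound is immediate from $F\ge0$. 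The main obstacle is precisely this cancellation: plain convexity alone leaves a coefficient $\gamma(1-\gamma L_{i(t)})$ that degenerates to $0$ when $L_{i(t)}=\max_i L_i$, and only the sharper smoothness-plus-convexity inequality, which simultaneously extracts $f_{i(t)}(\theta^{(t-1)})$ and a matching multiple of $\|g_t\|_\star^2$, produces the exact constant $1/\max_i L_i$ underlying the stated rate.
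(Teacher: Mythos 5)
Your proof is correct, and it departs from the paper's own argument at exactly the decisive step. Both proofs open the same way: the three-point identity for the mirror update, $\mu$-strong convexity of $\psi$ to lower-bound $D_\Psi(\theta^{(t)},\theta^{(t-1)})$, and Young's inequality to absorb the cross term into $\frac{\gamma^2}{2\mu}\|g_t\|_\star^2$ (this is the paper's first display, following Flammarion and Bach). From there the paper uses plain convexity, $\langle g_t,\theta^{(t-1)}-\theta^\star\rangle \geq f_{i(t)}(\theta^{(t-1)})$, together with a weak-growth bound taken \emph{in expectation}, $\E\big[\|g_t\|_\star^2\big] \leq \sup_i\|x_i\|_{2\to\star}^2\,\big[F(\theta^{(t-1)})-F(\theta^\star)\big]$; with the prescribed step-size this leaves a residual coefficient $\tfrac12$, and the paper's derivation actually ends with $\E[F(\bar\theta_t)] \leq \frac{2}{\gamma t}D_\psi(\theta^\star,\theta^{(0)})$, a constant \emph{twice} the one in the theorem statement (moreover, since $F=\frac{1}{2n}\sum_i d(x_i^\top\theta,\mathcal{Y}_i)^2$, the growth bound as stated there drops a factor $2$, and restoring it would make the paper's step-size degenerate under that route --- precisely the obstruction you identify). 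You instead invoke, per sample, the combined smoothness-convexity inequality $\langle\nabla f_i(\theta),\theta-\theta^\star\rangle \geq f_i(\theta)+\frac{1}{2\tilde L_i}\|\nabla f_i(\theta)\|_\star^2$, which is valid in general norms (by the conjugate/descent-lemma argument) once interpolation gives $\nabla f_i(\theta^\star)=0$; the two $\|g_t\|_\star^2$ terms then cancel exactly at $\gamma = 1/\max_i L_i$, the recursion telescopes pathwise before any expectation is taken, and you recover precisely the constant $\max_i L_i/t$ claimed in the theorem, with no growth condition to state. What the paper's route buys instead is modularity: the expected weak growth condition is the interface to the results of Vaswani et al.\ and extends verbatim to general expectations, as remarked after the theorem. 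Two small points to make explicit in a final write-up: the sign in Eq.~\eqref{eq:mirror_sharan} should be read as a descent step on $f_{i(t)}$ (you implicitly correct a typo, consistent with Lemma~\ref{lem:mirro_coord_dual}), and the bound $D_\Psi(\theta^\star,\theta^{(0)})\leq\psi(\theta^\star)$ needs $\psi(\theta^{(0)})\geq 0$ at the initialization $\theta^{(0)}=\argmin\psi$, e.g., $\theta^{(0)}=0$ for $\psi=\frac12\|\cdot\|_p^2$ --- an assumption the paper also uses silently.
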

We provide the proof in Appendix~\ref{app:mirror_descent_sharan}. The result is also applicable to general expectations and any form of convex objectives in the interpolation regime. We use this extension as one of our baseline in our experiments. In practice, as mentioned earlier, the update for mirror descent in Eq.~\eqref{eq:mirror_sharan} is similar to randomized dual coordinate ascent update in Lemma~\ref{lem:RCD_update}, in particular in early iterations (and not surprisingly, they behave similarly). Note here the difference in guarantees for the final iterates (which we get through a dual analysis) and the guarantees for the averaged iterate (which we get through a primal analysis).

\section{$\ell_p$-perceptrons} \label{sec:lp_percept}
So far, we have discussed very general formulations for optimization problems in the interpolation regime. In this section, we  discuss a specific problem which is widely used for linear binary classification, known as the perceptron algorithm, which is guaranteed to converge for linearly separable data. Here, we view the generalized $\ell_p$-norm perceptron algorithm from the lens of our primal-dual formulation. 

We consider $(x_i,y_i) \in \rb^d \times \{-1,1\}$ for $i \in \{1, \cdots, n\}$, and the problem of minimizing $\psi(\theta)$ such that $\forall i, y_i x_i^\top \theta \geqslant 1$, which can be written as $\tilde{x}_i^\top \theta \geqslant 1$, where $\tilde{x}_i = y_i x_i$ for all $i \in \{1, \dots, n\}$. For this section, we will be limiting ourselves to $\psi(\theta) = \frac{1}{2}\| \theta\|_p^2$ for $p\in(1,2]$. We know that $\psi(\theta) = \frac{1}{2} \|\theta\|_p^2$ for $p \in (1,2]$ is $(p-1)$-strongly convex with respect to the $\ell_p$-norm. In this section, we denote $X\in \mathbb{R}^{n\times d}$ the data matrix $X = (\tilde{x}_1^\top ;\tilde{x}_2^\top ; \cdots ; \tilde{x}_n^\top)$. Our generic optimization problem from \eq{just_primal} turns into:
\begin{equation}
\min_{ \theta \in \rb^d} \frac{1}{2} \| \theta \|_p^2 \mbox{ such that } X\theta \geqslant 1, \label{eq:percept_form}
\end{equation}
The dual problem is here 
\begin{equation}
  \max_{\alpha \in \rb_+^n}  - \frac{1}{2} \bigg\| \frac{-1}{n} \sum_{i=1}^n x_i \alpha_i \bigg\|_q^2 + \frac{1}{n}\sum_{ i=1}^n\alpha_i   \label{eq:percet_dual} ,
\end{equation}
 where $\| \cdot\|_q$ is dual norm of $\| \cdot\|_p$, with $1/p+1/q=1$.  At optimality, $\theta $  can be obtained from $X^\top \alpha$ as 
 $$
 \theta_j = \frac{1}{n}\| X^\top \alpha \|_q^{2-q}  (X^\top \alpha)_j^{q-1}
  ,$$
   where we define $u^{q-1} = |u|^{q-1}  {\rm sign}(u)$.
   
    The function $ \alpha \mapsto \frac{1}{2} \| X^\top \alpha \|_q^2$ is smooth, and the regular smoothness constant with respect to the $i$-th variable which is less than 
 $
 L_i = \frac{1}{p-1} \| x_i \|_q^2.$
 We can apply here the results from Proposition~\ref{prop:gen_result_primal_dual} to get the convergence in primal iterates for the the general $\ell_p$-norm perceptron formulation in Eq.~\eqref{eq:percept_form}, while optimizing the dual function via accelerated coordinate ascent in Eq.~\eqref{eq:percet_dual}.
 \begin{corollary} \label{cor:percept_1}
 For the generalized $\ell_p$-norm  perceptron described in our primal-dual framework in Equations~\eqref{eq:percept_form} and \eqref{eq:percet_dual}, we have
$ \displaystyle
    \E\Big[ \| \theta(\alpha)- \theta^\star\|_p \Big]\leq \sqrt{\frac{2 \E[{\rm gap}(\alpha)]}{p-1}}.
$
 \end{corollary}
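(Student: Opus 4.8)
The plan is to chain together three ingredients: the strong convexity of the regularizer, the primal--dual inequality of Proposition~\ref{prop:gen_result_primal_dual}, and Jensen's inequality to handle the expectation. The randomness here is entirely in the coordinate selection of the (accelerated) dual coordinate ascent, so $\alpha$, $\theta(\alpha)$, and ${\rm gap}(\alpha)$ are all random, and the goal is a bound in expectation on the $\ell_p$-distance of the primal iterate to $\theta^\star$.

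First I would recall that for the $\ell_p$-perceptron we take $\psi(\theta) = \tfrac{1}{2}\|\theta\|_p^2$, which is $(p-1)$-strongly convex with respect to the $\ell_p$-norm, so $\mu = p-1$. By the very definition of $\mu$-strong convexity, the associated Bregman divergence is bounded below by its quadratic term,
$$D_\Psi(\theta^\star, \theta(\alpha)) \geqslant \frac{p-1}{2}\,\|\theta^\star - \theta(\alpha)\|_p^2.$$
Next I would invoke Proposition~\ref{prop:gen_result_primal_dual}, which bounds this same Bregman divergence from above by the duality gap, $D_\Psi(\theta^\star, \theta(\alpha)) \leqslant {\rm gap}(\alpha)$. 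Combining the two displays and solving for the norm yields, for every realization of the random iterate,
$$\|\theta^\star - \theta(\alpha)\|_p \leqslant \sqrt{\frac{2\,{\rm gap}(\alpha)}{p-1}}.$$

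Finally I would take expectations over the coordinate randomness. Since the square root is concave, Jensen's inequality gives $\E[\sqrt{Z}] \leqslant \sqrt{\E[Z]}$, so that
$$\E\big[\|\theta(\alpha) - \theta^\star\|_p\big] \;\leqslant\; \E\Big[\sqrt{\tfrac{2\,{\rm gap}(\alpha)}{p-1}}\Big] \;\leqslant\; \sqrt{\frac{2\,\E[{\rm gap}(\alpha)]}{p-1}},$$
which is exactly the claimed bound.

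Structurally this is a direct substitution rather than a computation, so there is no serious obstacle; the only points requiring care are using the correct strong-convexity constant $\mu = p-1$ for the $\ell_p$-regularizer (so that the factor $\tfrac{2}{p-1}$ comes out right) and applying Jensen in the correct direction, exploiting concavity of $\sqrt{\cdot}$ to pull the expectation inside the root. One could then feed the rate from Eq.~\eqref{eq:acc_rcd_rate} for $\E[{\rm gap}(\alpha)]$ into this corollary to obtain an explicit $O(1/t)$ rate on $\E[\|\theta(\alpha)-\theta^\star\|_p]$ for the accelerated $\ell_p$-perceptron.
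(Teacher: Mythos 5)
Your proof is correct and follows essentially the same route as the paper's: combine Proposition~\ref{prop:gen_result_primal_dual} with the strong-convexity lower bound $D_{\frac{1}{2}\|\cdot\|_p^2}(\theta^\star,\theta) \geqslant \frac{p-1}{2}\|\theta-\theta^\star\|_p^2$ and take square roots. Your explicit use of Jensen's inequality to pass from $\E[\sqrt{{\rm gap}(\alpha)}]$ to $\sqrt{\E[{\rm gap}(\alpha)]}$ is a detail the paper leaves implicit, and spelling it out is a welcome clarification rather than a deviation.
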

 \begin{proof}
 The result comes from the application of Proposition~\ref{prop:gen_result_primal_dual} in the generalized $\ell_p$-norm perceptron from setting Eq.~\eqref{eq:percept_form}, with   $D_{\frac{1}{2}\|\cdot \|_p^2}(\theta^\star, \theta) \geq \frac{p-1}{2}\| \theta - \theta^\star\|_p^2$.
 \end{proof}
If we use accelerated randomized coordinate descent to optimize dual objective given in Eq.~\eqref{eq:percet_dual}, then after $t$ number of iterations, we get:
\begin{align}
    \E\Big[\| \theta_t- \theta^\star\|_p\Big] \leq \frac{    2\sqrt{2}      \max_i \|x_i \|_q}{\sqrt{(p-1)}t} \sqrt{\frac{G(\alpha^\star) - G(0)}{ \max_i \|x_i \|_q} + \frac{1}{2}\| \alpha^\star\|^2}, \label{eq:acc_percet_p}
\end{align}
where $\theta_t = \theta(\alpha_t).$ 

\paragraph{Mistake bound.} Since, we have the bound on the distance between primal iterate to its optimum, we can simply derive  the mistake bound for our algorithm which we prove in  Appendix~\ref{app:l_p_percept}.
\begin{lemma}\label{lem:mistake_l_p}
For the generalized $\ell_p$-norm  perceptron described in our primal-dual framework in Equations~\eqref{eq:percept_form} and \eqref{eq:percet_dual},  we make no mistakes on training data on average after 
$$ t >\frac{2\sqrt{2}R^2 }{\sqrt{p-1}  } \sqrt{\frac{G(\alpha^\star) - G(0)}{ R} + \frac{1}{2}\| \alpha^\star\|^2} $$ steps where $R = \max_i \| x_i\|_q$ and $\| \cdot\|_q$ is the dual norm of $\| \cdot\|_p$. 
\end{lemma}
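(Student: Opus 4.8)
The plan is to relate the per-example classification margin $\tilde{x}_i^\top \theta_t$ to the primal distance $\|\theta_t - \theta^\star\|_p$ and then feed in the accelerated rate of Eq.~\eqref{eq:acc_percet_p}. The key observation is that $\theta^\star$ is feasible for the hard-margin problem \eqref{eq:percept_form}, so $\tilde{x}_i^\top \theta^\star \geqslant 1$ for every $i$. For any iterate $\theta_t = \theta(\alpha_t)$, I would write $\tilde{x}_i^\top \theta_t = \tilde{x}_i^\top \theta^\star + \tilde{x}_i^\top(\theta_t - \theta^\star)$ and bound the second term by Hölder's inequality for the conjugate pair $(p,q)$, obtaining $\tilde{x}_i^\top \theta_t \geqslant 1 - \|\tilde{x}_i\|_q \|\theta_t - \theta^\star\|_p$. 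Since $\tilde{x}_i = y_i x_i$ with $y_i \in \{-1,1\}$ we have $\|\tilde{x}_i\|_q = \|x_i\|_q \leqslant R$, so that $\tilde{x}_i^\top \theta_t \geqslant 1 - R \|\theta_t - \theta^\star\|_p$ holds uniformly in $i$.

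Next I would take expectations over the algorithm's randomness and combine linearity of expectation with the uniform-in-$i$ bound to get $\E[\tilde{x}_i^\top \theta_t] \geqslant 1 - R\,\E[\|\theta_t - \theta^\star\|_p]$ for each $i$. A training example is (on average) classified correctly precisely when the expected margin is positive, i.e. as soon as $\E[\|\theta_t - \theta^\star\|_p] < 1/R$. It then remains to substitute the convergence guarantee of Eq.~\eqref{eq:acc_percet_p}, using $\max_i \|x_i\|_q = R$, which gives $\E[\|\theta_t - \theta^\star\|_p] \leqslant \frac{2\sqrt{2}\,R}{\sqrt{p-1}\,t}\sqrt{\frac{G(\alpha^\star)-G(0)}{R} + \frac{1}{2}\|\alpha^\star\|^2}$, and to force this upper bound to be strictly below $1/R$. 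Solving the resulting inequality for $t$ — the $1/t$ decay makes this a single linear rearrangement — produces exactly the stated threshold, with the extra factor of $R$ turning the leading $R$ into $R^2$.

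The only genuinely delicate point, and hence the main thing I would make precise, is the reading of ``no mistakes on training data on average'': it is the statement that the expected margin $\E[\tilde{x}_i^\top \theta_t]$ is positive simultaneously for all $i$, so that in expectation over the random coordinate choices the classifier sign-agrees with every label. Everything else — feasibility of $\theta^\star$, the Hölder step, and the algebra isolating $t$ — is routine, and I would emphasize that the bound $1 - R\|\theta_t - \theta^\star\|_p$ is independent of $i$, which is what lets a single distance bound certify correctness on \emph{all} examples at once.
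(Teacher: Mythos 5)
Your proposal is correct and follows essentially the same route as the paper's own proof: feasibility of $\theta^\star$ plus H\"older's inequality gives the uniform margin bound $\tilde{x}_i^\top \theta_t \geqslant 1 - R\|\theta_t - \theta^\star\|_p$, and plugging the accelerated rate of Eq.~\eqref{eq:acc_percet_p} into the condition $R\,\E[\|\theta_t-\theta^\star\|_p] \leqslant 1$ yields the stated threshold on $t$. Your only addition is making explicit that $\|\tilde{x}_i\|_q = \|x_i\|_q$ and pinning down the reading of ``no mistakes on average,'' both of which the paper leaves implicit.
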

The accelerated coordinate descent algorithm to solve the $\ell_p$-perceptron is given in Algorithm~\ref{alg:accel_p_general}. More details about the relationship between primal and dual variables, as well as dual ascent update for  random coordinate descent  for general $\ell_p$-norm perceptron, e.g., the dual problem in Eq.~\eqref{eq:percet_dual}, is given in Appendix~\ref{app:l_p_percept}. Mistake bounds for the classical $\ell_p$-perceptron are also recalled in  Appendix~\ref{app:l_p_percept}.

\paragraph{Baseline: primal mirror descent.} We consider the finite sum minimization with stochastic mirror descent update and mirror map $\psi = \frac{1}{2}\| \cdot\|_p^2$ as discussed in Section~\ref{subsec:sharan_extension}, that is, the finite sum minimization in Eq.~\eqref{eq:finite_sum} with $f_i(\theta) =\frac{1}{2} (1 - \theta^\top x_i)_{+}^2. $
\begin{corollary} \label{cor:percept_sharna}
Consider the finite sum minimization of $f(\theta) = \frac{1}{2n}\sum_{i=1}^n (1 - \theta^\top x_i)_{+}^2$ via stochastic mirror descent with mirror map $\psi(\cdot) = \frac{1}{2}\| \cdot\|_p^2$,  then on average, the proportion of mistakes on the training set is less than $\sqrt{\frac{\| \theta^\star\|_p^2 R^2}{(p-1)t}}$ where $R = \max_i \| x_i\|_q$.
\end{corollary}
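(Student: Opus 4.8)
The plan is to read this off as a direct instance of Theorem~\ref{thm:sharan_mirror_desc} and then convert the resulting bound on the squared-hinge objective into a bound on the $0/1$ mistake rate. First I would observe that the stated objective is exactly the finite-sum $F$ of \eqref{eq:finite_sum} with $k=1$ and $\mathcal{Y}_i = [1,\infty)$, since $d(\theta^\top x_i,[1,\infty))^2 = (1-\theta^\top x_i)_+^2$ (here $x_i$ already absorbs the label, i.e.\ it is $\tilde x_i = y_i x_i$). With $\psi = \tfrac12\|\cdot\|_p^2$, which is $\mu=(p-1)$-strongly convex with respect to $\|\cdot\|_p$, and with $k=1$ so that $\|x_i\|_{2\to\star}=\|x_i\|_q$, the per-coordinate constant is $L_i = \tfrac{1}{p-1}\|x_i\|_q^2$, whence $\max_i L_i = R^2/(p-1)$ with $R=\max_i\|x_i\|_q$. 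Applying Theorem~\ref{thm:sharan_mirror_desc} with step-size $\gamma = \mu/\sup_i\|x_i\|_{2\to\star}^2 = (p-1)/R^2$ then gives, for the averaged iterate,
$$\E\big[f(\bar\theta^{(t)})\big] \;\leqslant\; \frac{\max_i L_i}{t}\,\psi(\theta^\star) \;=\; \frac{R^2}{(p-1)t}\cdot\frac{\|\theta^\star\|_p^2}{2}\;=\;\frac{R^2\|\theta^\star\|_p^2}{2(p-1)t}.$$

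Next I would turn this into a mistake bound. The key pointwise fact is that whenever $\theta$ misclassifies example $i$, i.e.\ $\theta^\top x_i\leqslant 0$, the hinge value satisfies $(1-\theta^\top x_i)_+\geqslant 1$, so $\one\{\theta^\top x_i\leqslant 0\}\leqslant (1-\theta^\top x_i)_+$ for every $i$. Averaging and applying Cauchy--Schwarz (equivalently, Jensen for the square) bounds the empirical mistake proportion by
$$\frac{1}{n}\sum_{i=1}^n \one\{\theta^\top x_i\leqslant 0\} \;\leqslant\; \frac{1}{n}\sum_{i=1}^n (1-\theta^\top x_i)_+ \;\leqslant\; \sqrt{\frac{1}{n}\sum_{i=1}^n (1-\theta^\top x_i)_+^2}\;=\;\sqrt{2f(\theta)}.$$
Evaluating at $\theta=\bar\theta^{(t)}$, taking expectations over the random coordinate draws, and using Jensen for the concave map $u\mapsto\sqrt u$ gives $\E[\sqrt{2f(\bar\theta^{(t)})}]\leqslant \sqrt{2\,\E[f(\bar\theta^{(t)})]}$. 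Combining with the displayed objective bound yields $\sqrt{2\cdot \tfrac{R^2\|\theta^\star\|_p^2}{2(p-1)t}}=\sqrt{\tfrac{R^2\|\theta^\star\|_p^2}{(p-1)t}}$, which is exactly the claimed rate.

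The only genuinely delicate point is the two-step chaining of inequalities, and getting it in the right order: one must first pass from the $0/1$ indicator to the \emph{non-squared} hinge loss pointwise, and only then invoke Cauchy--Schwarz to relate the averaged hinge to the root of the averaged squared hinge that Theorem~\ref{thm:sharan_mirror_desc} actually controls. This is what produces the square root, and hence the $1/\sqrt{t}$ rate, rather than the $1/t$ rate one would wrongly obtain by bounding the indicator directly by the squared hinge. The remaining subtlety is purely bookkeeping: the expectation is over the random coordinate selection, so the concave-Jensen step is precisely what licenses moving $\E[\cdot]$ inside the square root.
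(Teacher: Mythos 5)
Your proof is correct and is essentially the argument the paper compresses into one line: apply Theorem~\ref{thm:sharan_mirror_desc} with $\max_i L_i = R^2/(p-1)$ and $\psi(\theta^\star)=\tfrac{1}{2}\|\theta^\star\|_p^2$, then pass from the $0/1$ error to the square root of the (squared-hinge) objective, using Jensen to move the expectation inside the square root; this is exactly the paper's ``mistakes are at most the square root of the excess risk'' step, and your chaining through the non-squared hinge plus Cauchy--Schwarz is a valid way to justify it.

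One correction to your closing commentary, though: bounding the indicator directly by the \emph{squared} hinge is not wrong. Since $x_i^\top\theta\leqslant 0$ implies $(1-x_i^\top\theta)_+^2\geqslant 1$, one has $\one\{x_i^\top\theta\leqslant 0\}\leqslant (1-x_i^\top\theta)_+^2$ pointwise, hence $\E\big[\tfrac{1}{n}\sum_{i=1}^n \one\{x_i^\top\bar\theta^{(t)}\leqslant 0\}\big]\leqslant 2\,\E\big[f(\bar\theta^{(t)})\big]\leqslant \tfrac{R^2\|\theta^\star\|_p^2}{(p-1)t}$, which is a perfectly valid $1/t$ bound and is in fact \emph{sharper} than the stated $1/\sqrt{t}$ bound whenever the right-hand side is below $1$. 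The square-root route is simply what reproduces the (weaker) form in which the corollary is stated; it is not forced by any failure of the direct squared-hinge comparison.
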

\begin{proof}
The proof comes directly from Theorem~\ref{thm:sharan_mirror_desc} and from the fact that the proportion of mistakes on the training set is less than the square root of the excess risk.
\end{proof}
Similar bounds on the proportion of mistakes can also be obtained while optimizing $f(\theta) = \frac{1}{n}\sum_{i = 1}^n (1 - x_i^\top \theta)_+$ via stochastic mirror descent with mirror map $\frac{1}{2}\| \cdot\|_p^2$. However, while tuning the step size, it requires the knowledge of $\| \theta^\star\|_p$, hence we do not include it in our base line. 

We can compare the minimum number of iterations required to achieve no further mistakes while training in Lemma~\ref{lem:mistake_l_p} and Corollary~\ref{cor:percept_sharna} to get the conditions on optimal primal and dual optimal variables under which our method (which has a better dependence in the number of iterations $t$) performs better than the baseline. We discuss these in the Appendix~\ref{app:l_p_percept}. In our empirical evaluationin Section~\ref{sec:expts}, dual accelerate coordinate ascent significantly outperforms primal mirror descent.

\paragraph{Special Case of $\ell_1$-perceptron.} Our goal in this specific case is to solve the following sparse problem,
\begin{align}
\theta_0 = \argmin_{ \theta \in \rb^d} \frac{1}{2} \| \theta \|_1^2
\mbox{ such that } X\theta \geqslant 1. \label{eq:sparse_perceptron}
\end{align}
 $\|\cdot\|_1$ is not strongly convex, hence we can not fit this problem to our formulation. However, following~\citet{duchi2010composite},  we solve the problem in \eqref{eq:percept_form} with $p = 1+\frac{1}{\log d}$ where $d$ is the dimension.

\begin{figure*}[h!] 
\centering
\begin{subfigure}[t]{0.47\textwidth}
  \centering
  \includegraphics[width=7cm,height=5.2cm]{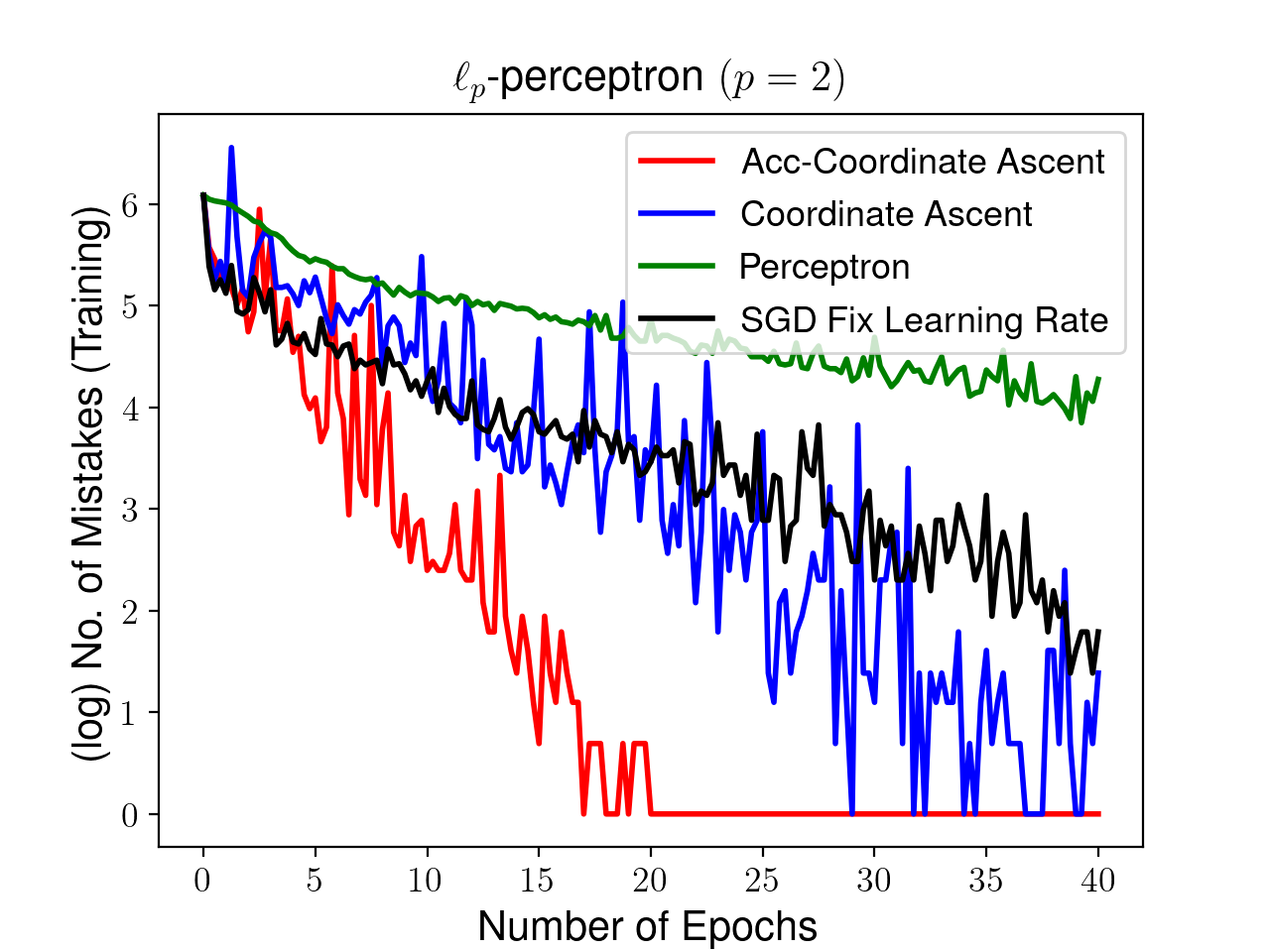}
  \caption{Number of mistakes on the training test (in log scale).}
  \label{fig:l2_train}
\end{subfigure}%
~
\begin{subfigure}[t]{0.47\textwidth}
  \centering
  \includegraphics[width=7cm,height=5.2cm]{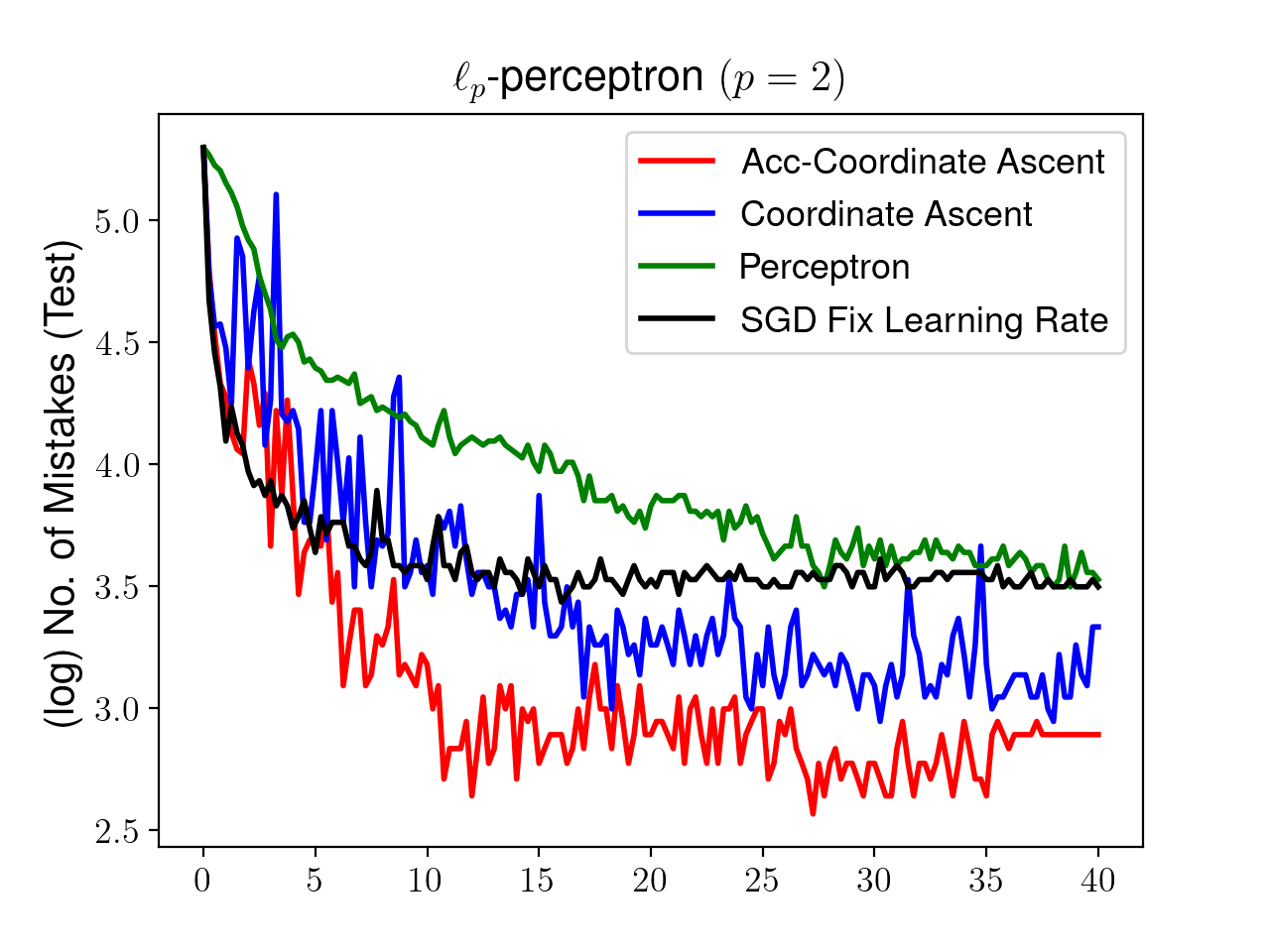}
  \caption{Number of mistakes on the test  (in log scale).}
  \label{fig:l2_test}
\end{subfigure}
\vspace{-1mm}
\caption{Experimental results for $\ell_2$-perceptron}
 \label{fig:l_2_percept}
\end{figure*}

\begin{figure*}[h!] 
\centering
\begin{subfigure}[t]{0.47\textwidth}
  \centering
  \includegraphics[width=7cm,height=5.2cm]{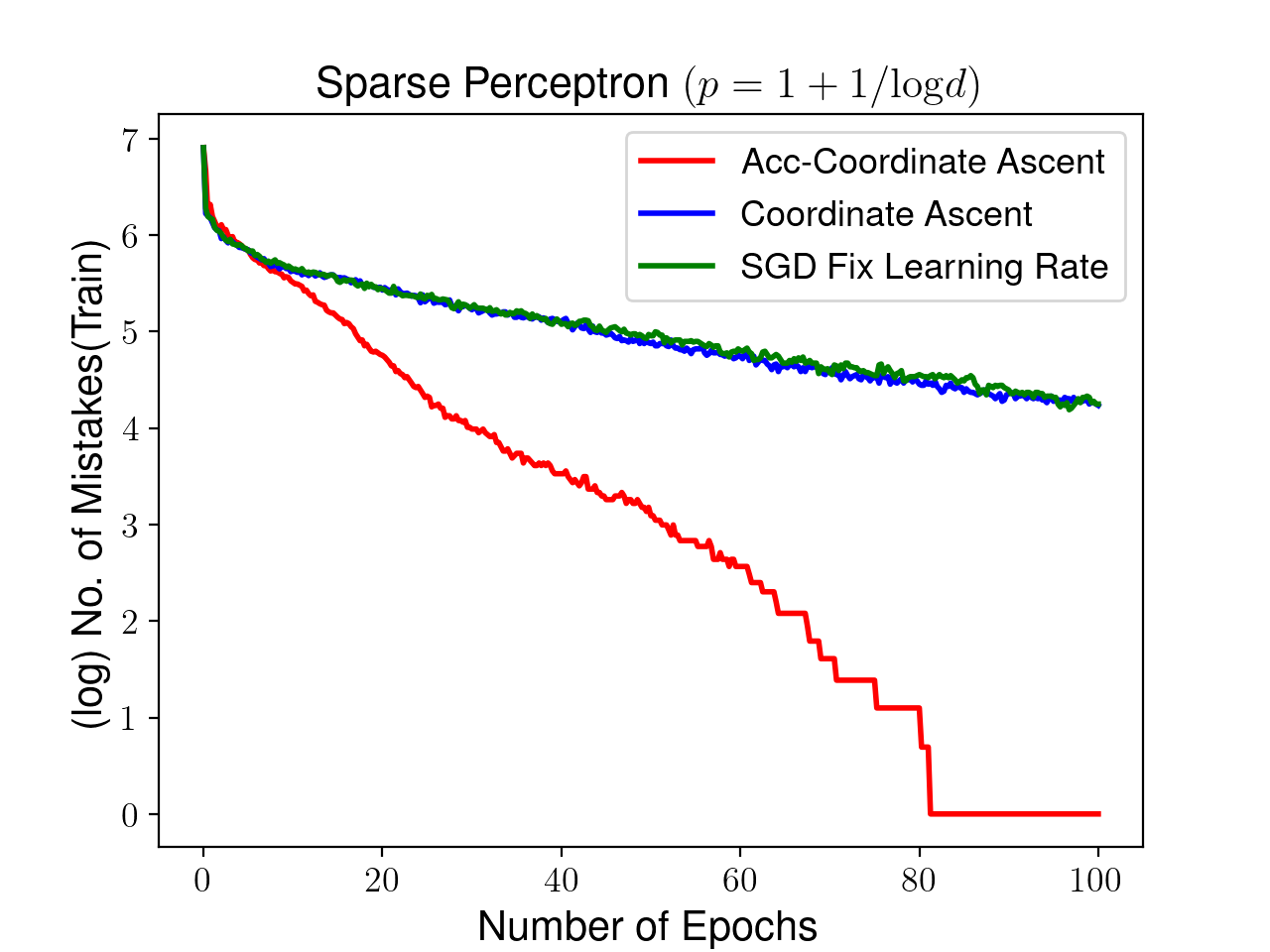}
  \caption{Number of mistakes on the training (in log scale).}
  \label{fig:l1_train}
\end{subfigure}%
~
\begin{subfigure}[t]{0.47\textwidth}
  \centering
  \includegraphics[width=7cm,height=5.2cm]{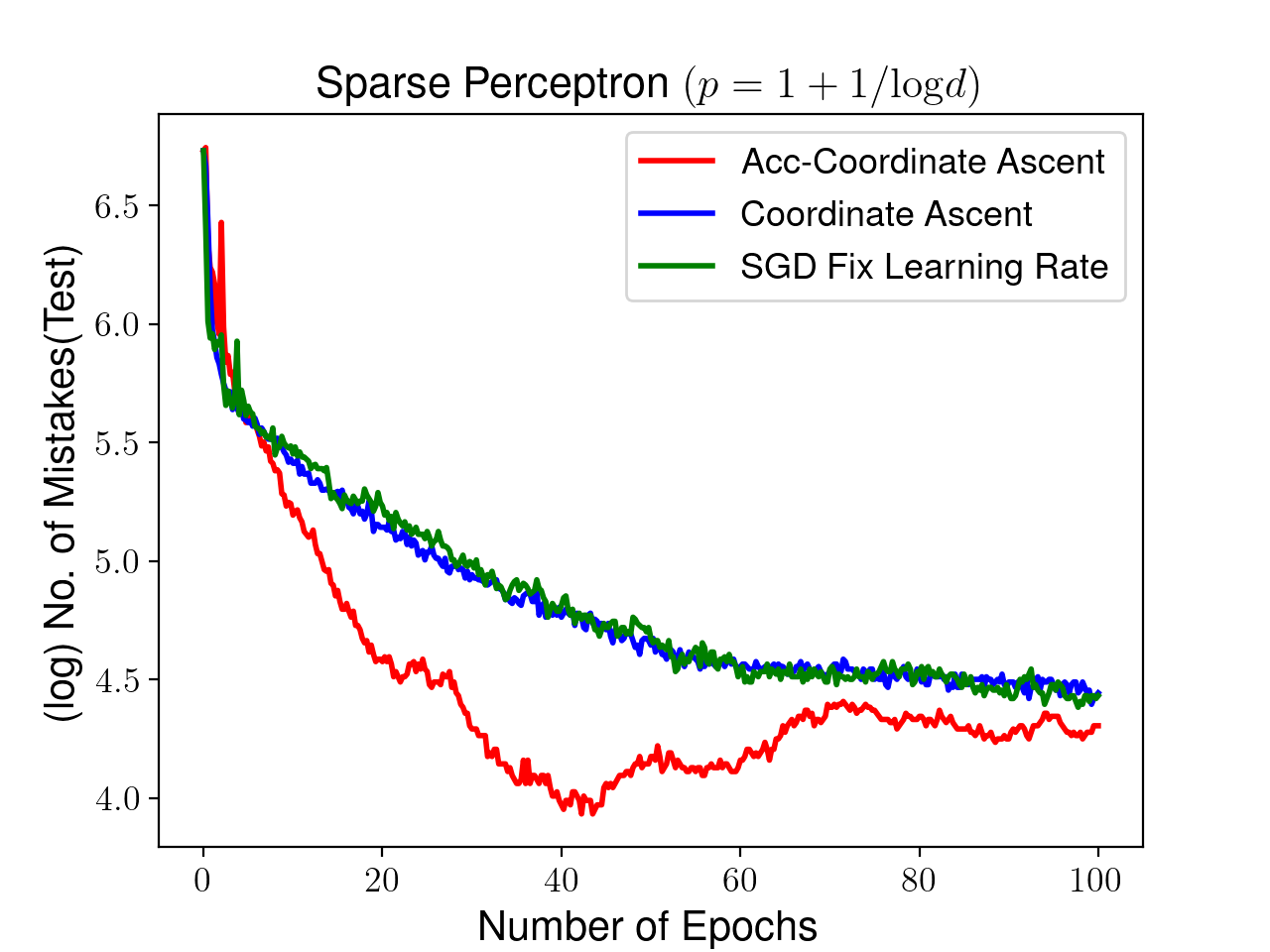}
  \caption{Number of mistakes on the test (in log scale).}
  \label{fig:l1_test}
\end{subfigure}%
\vspace{-1mm}
\caption{Experimental results for sparse perceptron.}
 \label{fig:l_1_percept}
\end{figure*}

 \section{Experiments} \label{sec:expts}
 In this section, we provide empirical evaluation for the methods discussed in this paper with the $\ell_p$-perceptron. We generate data from a  Gaussian distribution in dimension $d=2000$, which we describe below.  We consider two settings of $p$ for our experiments, $p=2 $ which is usual perceptron, and $p = 1+ \frac{1}{\log d}$, which is the sparse perceptron setting. 
 
 \paragraph{Data generation.} We generate $n=1000$ inputs $x_i \in \R^d$, $i=1,\dots,n$ with $d=2000$ from a Gaussian distribution centered at 0 and covariance matrix $\Sigma$ which is a diagonal matrix. Similarly, we generate a random $d=2000$ prediction vector $\theta$ sampled again from the normal distribution.
 
 For $\ell_2$-perceptron, the $i$-th eigenvalue for $\Sigma$ is $1/{i}^{3/2}$ and for sparse perceptron $i$-th eigenvalue for $\Sigma$, is $1/{i}$. We compute the prediction vector $y_i$ for $x_i$ as follows, $y_i = {\rm sign}(x_i^\top \theta + b)$ where we fix $b =0.005$. We also remove those pair of $(x_i,y_i)$ from the data for which we have $x_i^\top \theta + b \leq 0.1$. 
We generate $1000$ train examples and $1000$ test examples for both  settings. 
 For the sparse perceptron case, we make the prediction vector $\theta$ sparse by randomly choosing $50$ entries to be non zero. We then compute the prediction vector similar to the $\ell_p$-perceptron case, $y_i = {\rm sign}(x_i^\top \theta + b)$ where we fix $b =0.005$ and remove those pair of $(x_i,y_i)$ from the data for which we have, $x_i^\top \theta + b \leq 0.1$.
 
 \paragraph{Baseline.} For the $\ell_2$-perceptron, we compare accelerated coordinate descent and randomized coordinate descent with the perceptron and primal SGD~\citep{vaswani2018fast}. For the sparse perceptron, we compare the accelerated coordinate descent and randomized coordinate descent with extension of primal SGD to stochastic mirror descent case (discussed in section~\ref{subsec:sharan_extension} with $f_i = \frac{1}{2} (1-x_i^\top\theta)_+^2$) with mirror map $\psi(\cdot) =\frac{1}{2}\| \cdot\|_p^2$ where $p = 1+ \frac{1}{\log d}$. Note that we compare to non-averaged SGD (for which we provide a new proof), which works significantly better than averaged SGD.
 
 Comparisons for the $\ell_2$-perceptron and sparse perceptron are given in Figures \ref{fig:l_2_percept} and \ref{fig:l_1_percept} respectively. 
 
 We can make the following observations:
 \begin{itemize}
     \item[(a)] From both the training plots (Figure~\ref{fig:l2_train} and Figure~\ref{fig:l1_train}), it is clear that we gain  significantly in training performance over primal SGD and the perceptron if  we optimize the dual with accelerated randomized coordinate ascent method, which supports our theoretical claims made in Section~\ref{sec:lp_percept}.
     
     \item[(b)] For testing errors, we also see gains for our accelerated perceptron, which is not supported by theoretical arguments. This gives motivation to further study this algorithm for general expectations.
     
     \item[(c)] Note that in the semi-log plots, we observe an affine behavior of the training errors, highlighting exponential convergence. This can be explained by a strongly convex dual problem (since the matrix $XX^\top$ is invertible), and could be quantified using usual convergence rates for coordinate ascent for strongly-convex objectives.
 \end{itemize}{}

 \section{Conclusion} \label{sec:conclude}
 In this paper, we proposed algorithms that are explicitly regularizing solutions of an interpolation problem. This is done through a dual approach, and, with acceleration, it improves over existing algorithms. Several natural questions are worth exploring: (1) Can we explicitly characterize linear convergence in the dual (like observed in experiments), with or without regularization? (2) How are our algorithms performing beyond the interpolation regime, where the dual become unbounded but some primal information can typically be recovered in Dykstra-style algorithms~\citep{bauschke2015projection}? (3)
Can we extend our approach to  saddle-point formulations such as proposed by~\citet{kundu2018convex}? Can we prove any improvement in  the general population regime, where we aim at bounds on testing data?

\subsection*{Acknowledgements}
This work was funded in part by the French government under management of Agence Nationale
de la Recherche as part of the ``Investissements d'avenir'' program, reference ANR-19-P3IA-0001
(PRAIRIE 3IA Institute). We also acknowledge support from the European Research Council (grant
SEQUOIA 724063).

\bibliography{ml}

\begin{thebibliography}{64}
\providecommand{\natexlab}[1]{#1}
\providecommand{\url}[1]{\texttt{#1}}
\expandafter\ifx\csname urlstyle\endcsname\relax
  \providecommand{\doi}[1]{doi: #1}\else
  \providecommand{\doi}{doi: \begingroup \urlstyle{rm}\Url}\fi

\bibitem[Allen-Zhu et~al.(2016)Allen-Zhu, Qu, Richt{\'a}rik, and
  Yuan]{allen2016even}
Allen-Zhu, Z., Qu, Z., Richt{\'a}rik, P., and Yuan, Y.
\newblock Even faster accelerated coordinate descent using non-uniform
  sampling.
\newblock In \emph{International Conference on Machine Learning}, pp.\
  1110--1119, 2016.

\bibitem[Arora et~al.(2019)Arora, Cohen, Hu, and Luo]{arora2019implicit}
Arora, S., Cohen, N., Hu, W., and Luo, Y.
\newblock Implicit regularization in deep matrix factorization.
\newblock In \emph{Advances in Neural Information Processing Systems}, pp.\
  7411--7422, 2019.

\bibitem[Bach \& Moulines(2011)Bach and Moulines]{moulines2011non}
Bach, F. and Moulines, E.
\newblock Non-asymptotic analysis of stochastic approximation algorithms for
  machine learning.
\newblock In \emph{Advances in Neural Information Processing Systems}, pp.\
  451--459, 2011.

\bibitem[Ball et~al.(1994)Ball, Carlen, and Lieb]{ball1994sharp}
Ball, K., Carlen, E.~A., and Lieb, E.~H.
\newblock Sharp uniform convexity and smoothness inequalities for trace norms.
\newblock \emph{Inventiones mathematicae}, 115\penalty0 (1):\penalty0 463--482,
  1994.

\bibitem[Bassily et~al.(2018)Bassily, Belkin, and Ma]{bassily2018exponential}
Bassily, R., Belkin, M., and Ma, S.
\newblock On exponential convergence of sgd in non-convex over-parametrized
  learning.
\newblock \emph{arXiv preprint arXiv:1811.02564}, 2018.

\bibitem[Bauschke \& Combettes(2011)Bauschke and Combettes]{bauschke2011convex}
Bauschke, H.~H. and Combettes, P.~L.
\newblock \emph{Convex Analysis and Monotone Operator Theory in Hilbert
  Spaces}, volume 408.
\newblock Springer, 2011.

\bibitem[Bauschke \& Koch(2015)Bauschke and Koch]{bauschke2015projection}
Bauschke, H.~H. and Koch, V.~R.
\newblock Projection methods: Swiss army knives for solving feasibility and
  best approximation problems with halfspaces.
\newblock \emph{Contemp. Math}, 636:\penalty0 1--40, 2015.

\bibitem[Belkin et~al.(2018)Belkin, Rakhlin, and Tsybakov]{belkin2018does}
Belkin, M., Rakhlin, A., and Tsybakov, A.~B.
\newblock Does data interpolation contradict statistical optimality?
\newblock \emph{arXiv preprint arXiv:1806.09471}, 2018.

\bibitem[Block(1962)]{block1962perceptron}
Block, H.-D.
\newblock The perceptron: A model for brain functioning. i.
\newblock \emph{Reviews of Modern Physics}, 34\penalty0 (1):\penalty0 123,
  1962.

\bibitem[Boyd \& Vandenberghe(2004)Boyd and Vandenberghe]{boyd2004convex}
Boyd, S. and Vandenberghe, L.
\newblock \emph{Convex optimization}.
\newblock Cambridge university press, 2004.

\bibitem[Boyle \& Dykstra(1986)Boyle and Dykstra]{boyle1986method}
Boyle, J.~P. and Dykstra, R.~L.
\newblock A method for finding projections onto the intersection of convex sets
  in {H}ilbert spaces.
\newblock In \emph{Advances in Order Restricted Statistical Inference}, pp.\
  28--47. Springer, 1986.

\bibitem[Bregman(1967)]{bregman1967relaxation}
Bregman, L.~M.
\newblock The relaxation method of finding the common point of convex sets and
  its application to the solution of problems in convex programming.
\newblock \emph{USSR computational mathematics and mathematical physics},
  7\penalty0 (3):\penalty0 200--217, 1967.

\bibitem[Calatroni et~al.(2019)Calatroni, Garrigos, Rosasco, and
  Villa]{calatroni2019accelerated}
Calatroni, L., Garrigos, G., Rosasco, L., and Villa, S.
\newblock Accelerated iterative regularization via dual diagonal descent.
\newblock \emph{arXiv preprint arXiv:1912.12153}, 2019.

\bibitem[Cevher \& V{\~u}(2019)Cevher and V{\~u}]{cevher2019linear}
Cevher, V. and V{\~u}, B.~C.
\newblock On the linear convergence of the stochastic gradient method with
  constant step-size.
\newblock \emph{Optimization Letters}, 13\penalty0 (5):\penalty0 1177--1187,
  2019.

\bibitem[Chambolle et~al.(2017)Chambolle, Tan, and
  Vaiter]{chambolle2017accelerated}
Chambolle, A., Tan, P., and Vaiter, S.
\newblock Accelerated alternating descent methods for {D}ykstra-like problems.
\newblock \emph{Journal of Mathematical Imaging and Vision}, 59\penalty0
  (3):\penalty0 481--497, 2017.

\bibitem[Cutkosky(2019)]{cutkosky2019anytime}
Cutkosky, A.
\newblock Anytime online-to-batch conversions, optimism, and acceleration.
\newblock \emph{arXiv preprint arXiv:1903.00974}, 2019.

\bibitem[Defazio et~al.(2014)Defazio, Bach, and
  Lacoste-Julien]{defazio2014saga}
Defazio, A., Bach, F., and Lacoste-Julien, S.
\newblock {SAGA}: A fast incremental gradient method with support for
  non-strongly convex composite objectives.
\newblock In \emph{Advances in Neural Information Processing Systems}, pp.\
  1646--1654, 2014.

\bibitem[Duchi et~al.(2011)Duchi, Hazan, and Singer]{duchi2011adaptive}
Duchi, J., Hazan, E., and Singer, Y.
\newblock Adaptive subgradient methods for online learning and stochastic
  optimization.
\newblock \emph{Journal of Machine Learning Research}, 12\penalty0
  (Jul):\penalty0 2121--2159, 2011.

\bibitem[Duchi et~al.(2010)Duchi, Shalev-Shwartz, Singer, and
  Tewari]{duchi2010composite}
Duchi, J.~C., Shalev-Shwartz, S., Singer, Y., and Tewari, A.
\newblock Composite objective mirror descent.
\newblock In \emph{COLT}, pp.\  14--26, 2010.

\bibitem[Flammarion \& Bach(2017)Flammarion and Bach]{flammarion2017stochastic}
Flammarion, N. and Bach, F.
\newblock Stochastic composite least-squares regression with convergence rate o
  (1/n).
\newblock \emph{arXiv preprint arXiv:1702.06429}, 2017.

\bibitem[Freund \& Schapire(1999)Freund and Schapire]{freund1999large}
Freund, Y. and Schapire, R.~E.
\newblock Large margin classification using the perceptron algorithm.
\newblock \emph{Machine learning}, 37\penalty0 (3):\penalty0 277--296, 1999.

\bibitem[Gaffke \& Mathar(1989)Gaffke and Mathar]{gaffke1989cyclic}
Gaffke, N. and Mathar, R.
\newblock A cyclic projection algorithm via duality.
\newblock \emph{Metrika}, 36\penalty0 (1):\penalty0 29--54, 1989.

\bibitem[Golub et~al.(1999)Golub, Hansen, and O'Leary]{golub1999tikhonov}
Golub, G.~H., Hansen, P.~C., and O'Leary, D.~P.
\newblock Tikhonov regularization and total least squares.
\newblock \emph{SIAM journal on matrix analysis and applications}, 21\penalty0
  (1):\penalty0 185--194, 1999.

\bibitem[Goodfellow et~al.(2016)Goodfellow, Bengio, and
  Courville]{Goodfellow-et-al-2016}
Goodfellow, I., Bengio, Y., and Courville, A.
\newblock \emph{Deep Learning}.
\newblock MIT Press, 2016.

\bibitem[Grove et~al.(2001)Grove, Littlestone, and
  Schuurmans]{grove2001general}
Grove, A.~J., Littlestone, N., and Schuurmans, D.
\newblock General convergence results for linear discriminant updates.
\newblock \emph{Machine Learning}, 43\penalty0 (3):\penalty0 173--210, 2001.

\bibitem[Gunasekar et~al.(2018)Gunasekar, Lee, Soudry, and
  Srebro]{gunasekar2018characterizing}
Gunasekar, S., Lee, J., Soudry, D., and Srebro, N.
\newblock Characterizing implicit bias in terms of optimization geometry.
\newblock In \emph{International Conference on Machine Learning}, 2018.

\bibitem[Halperin(1962)]{halperin1962product}
Halperin, I.
\newblock The product of projection operators.
\newblock \emph{Acta Sci. Math.(Szeged)}, 23\penalty0 (1):\penalty0 96--99,
  1962.

\bibitem[Hendrikx et~al.(2019)Hendrikx, Bach, and
  Massouli{\'e}]{hendrikx2019accelerated}
Hendrikx, H., Bach, F., and Massouli{\'e}, L.
\newblock An accelerated decentralized stochastic proximal algorithm for finite
  sums.
\newblock In \emph{Advances in Neural Information Processing Systems}, pp.\
  952--962, 2019.

\bibitem[Kakade et~al.(2009)Kakade, Shalev-Shwartz, and
  Tewari]{kakade2009duality}
Kakade, S., Shalev-Shwartz, S., and Tewari, A.
\newblock On the duality of strong convexity and strong smoothness: Learning
  applications and matrix regularization.
\newblock \emph{Unpublished Manuscript, http://ttic. uchicago.
  edu/shai/papers/KakadeShalevTewari09. pdf}, 2\penalty0 (1), 2009.

\bibitem[Kavis et~al.(2019)Kavis, Levy, Bach, and Cevher]{kavis2019unixgrad}
Kavis, A., Levy, K.~Y., Bach, F., and Cevher, V.
\newblock Unixgrad: A universal, adaptive algorithm with optimal guarantees for
  constrained optimization.
\newblock In \emph{Advances in Neural Information Processing Systems}, pp.\
  6257--6266, 2019.

\bibitem[Kingma \& Ba(2014)Kingma and Ba]{kingma2014adam}
Kingma, D.~P. and Ba, J.
\newblock Adam: A method for stochastic optimization.
\newblock \emph{arXiv preprint arXiv:1412.6980}, 2014.

\bibitem[Kivinen(2003)]{kivinen2003online}
Kivinen, J.
\newblock Online learning of linear classifiers.
\newblock In \emph{Advanced lectures on machine learning}, pp.\  235--257.
  Springer, 2003.

\bibitem[Kubo et~al.(2019)Kubo, Banno, Manabe, and Minoji]{kubo2019implicit}
Kubo, M., Banno, R., Manabe, H., and Minoji, M.
\newblock Implicit regularization in over-parameterized neural networks.
\newblock \emph{arXiv preprint arXiv:1903.01997}, 2019.

\bibitem[Kundu et~al.(2018)Kundu, Bach, and Bhattacharya]{kundu2018convex}
Kundu, A., Bach, F., and Bhattacharya, C.
\newblock Convex optimization over intersection of simple sets: improved
  convergence rate guarantees via an exact penalty approach.
\newblock In \emph{International Conference on Artificial Intelligence and
  Statistics}, pp.\  958--967, 2018.

\bibitem[Liang \& Rakhlin(2018)Liang and Rakhlin]{liang2018just}
Liang, T. and Rakhlin, A.
\newblock Just interpolate: Kernel ``ridgeless'' regression can generalize.
\newblock \emph{arXiv preprint arXiv:1808.00387}, 2018.

\bibitem[Lin et~al.(2015)Lin, Lu, and Xiao]{lin2015accelerated}
Lin, Q., Lu, Z., and Xiao, L.
\newblock An accelerated randomized proximal coordinate gradient method and its
  application to regularized empirical risk minimization.
\newblock \emph{SIAM Journal on Optimization}, 25\penalty0 (4):\penalty0
  2244--2273, 2015.

\bibitem[Liu \& Belkin(2018)Liu and Belkin]{liu2018accelerating}
Liu, C. and Belkin, M.
\newblock Accelerating {SGD} with momentum for over-parameterized learning.
\newblock \emph{arXiv preprint arXiv:1810.13395}, 2018.

\bibitem[Ma et~al.(2017)Ma, Bassily, and Belkin]{ma2017power}
Ma, S., Bassily, R., and Belkin, M.
\newblock The power of interpolation: Understanding the effectiveness of {SGD}
  in modern over-parametrized learning.
\newblock \emph{arXiv preprint arXiv:1712.06559}, 2017.

\bibitem[Minsky \& Papert(2017)Minsky and Papert]{minsky2017perceptrons}
Minsky, M. and Papert, S.~A.
\newblock \emph{Perceptrons: An introduction to computational geometry}.
\newblock MIT press, 2017.

\bibitem[Nemirovski et~al.(2009)Nemirovski, Juditsky, Lan, and
  Shapiro]{nemirovski2009robust}
Nemirovski, A., Juditsky, A., Lan, G., and Shapiro, A.
\newblock Robust stochastic approximation approach to stochastic programming.
\newblock \emph{SIAM Journal on Optimization}, 19\penalty0 (4):\penalty0
  1574--1609, 2009.

\bibitem[Nesterov(2012)]{nesterov2012efficiency}
Nesterov, Y.
\newblock Efficiency of coordinate descent methods on huge-scale optimization
  problems.
\newblock \emph{SIAM Journal on Optimization}, 22\penalty0 (2):\penalty0
  341--362, 2012.

\bibitem[Nesterov \& Stich(2017)Nesterov and Stich]{nesterov2017efficiency}
Nesterov, Y. and Stich, S.~U.
\newblock Efficiency of the accelerated coordinate descent method on structured
  optimization problems.
\newblock \emph{SIAM Journal on Optimization}, 27\penalty0 (1):\penalty0
  110--123, 2017.

\bibitem[Polyak(1990)]{polyak1990new}
Polyak, B.~T.
\newblock New stochastic approximation type procedures.
\newblock \emph{Automat. i Telemekh}, 7\penalty0 (98-107):\penalty0 2, 1990.

\bibitem[Polyak \& Juditsky(1992)Polyak and Juditsky]{polyak1992acceleration}
Polyak, B.~T. and Juditsky, A.~B.
\newblock Acceleration of stochastic approximation by averaging.
\newblock \emph{SIAM Fournal on Control and Pptimization}, 30\penalty0
  (4):\penalty0 838--855, 1992.

\bibitem[Richt{\'a}rik \& Tak{\'a}{\v{c}}(2014)Richt{\'a}rik and
  Tak{\'a}{\v{c}}]{richtarik2014iteration}
Richt{\'a}rik, P. and Tak{\'a}{\v{c}}, M.
\newblock Iteration complexity of randomized block-coordinate descent methods
  for minimizing a composite function.
\newblock \emph{Mathematical Programming}, 144\penalty0 (1-2):\penalty0 1--38,
  2014.

\bibitem[Robbins \& Monro(1951)Robbins and Monro]{robbins1951stochastic}
Robbins, H. and Monro, S.
\newblock A stochastic approximation method.
\newblock \emph{The annals of mathematical statistics}, pp.\  400--407, 1951.

\bibitem[Rudi et~al.(2015)Rudi, Camoriano, and Rosasco]{rudi2015less}
Rudi, A., Camoriano, R., and Rosasco, L.
\newblock Less is more: Nystr{\"o}m computational regularization.
\newblock In \emph{Advances in Neural Information Processing Systems}, pp.\
  1657--1665, 2015.

\bibitem[Shalev-Shwartz \& Singer(2005)Shalev-Shwartz and
  Singer]{shalev2005new}
Shalev-Shwartz, S. and Singer, Y.
\newblock A new perspective on an old perceptron algorithm.
\newblock In \emph{International Conference on Computational Learning Theory},
  pp.\  264--278. Springer, 2005.

\bibitem[Shalev-Shwartz \& Zhang(2013)Shalev-Shwartz and
  Zhang]{shalev2013stochastic}
Shalev-Shwartz, S. and Zhang, T.
\newblock Stochastic dual coordinate ascent methods for regularized loss
  minimization.
\newblock \emph{Journal of Machine Learning Research}, 14\penalty0
  (Feb):\penalty0 567--599, 2013.

\bibitem[Shalev-Shwartz \& Zhang(2014)Shalev-Shwartz and
  Zhang]{shalev2014accelerated}
Shalev-Shwartz, S. and Zhang, T.
\newblock Accelerated proximal stochastic dual coordinate ascent for
  regularized loss minimization.
\newblock In \emph{International conference on machine learning}, pp.\  64--72,
  2014.

\bibitem[Soudry et~al.(2018)Soudry, Hoffer, Nacson, Gunasekar, and
  Srebro]{soudry2018implicit}
Soudry, D., Hoffer, E., Nacson, M.~S., Gunasekar, S., and Srebro, N.
\newblock The implicit bias of gradient descent on separable data.
\newblock \emph{The Journal of Machine Learning Research}, 19\penalty0
  (1):\penalty0 2822--2878, 2018.

\bibitem[Srivastava et~al.(2014)Srivastava, Hinton, Krizhevsky, Sutskever, and
  Salakhutdinov]{srivastava2014dropout}
Srivastava, N., Hinton, G., Krizhevsky, A., Sutskever, I., and Salakhutdinov,
  R.
\newblock Dropout: a simple way to prevent neural networks from overfitting.
\newblock \emph{The journal of machine learning research}, 15\penalty0
  (1):\penalty0 1929--1958, 2014.

\bibitem[Tibshirani(2017)]{tibshirani2017dykstra}
Tibshirani, R.~J.
\newblock Dykstra's algorithm, admm, and coordinate descent: Connections,
  insights, and extensions.
\newblock In \emph{Advances in Neural Information Processing Systems}, pp.\
  517--528, 2017.

\bibitem[Tsampouka \& Shawe-Taylor(2005)Tsampouka and
  Shawe-Taylor]{tsampouka2005analysis}
Tsampouka, P. and Shawe-Taylor, J.
\newblock Analysis of generic perceptron-like large margin classifiers.
\newblock In \emph{European Conference on Machine Learning}, pp.\  750--758.
  Springer, 2005.

\bibitem[Tseng(1993)]{tseng1993dual}
Tseng, P.
\newblock Dual coordinate ascent methods for non-strictly convex minimization.
\newblock \emph{Mathematical programming}, 59\penalty0 (1-3):\penalty0
  231--247, 1993.

\bibitem[Tseng(2001)]{tseng2001convergence}
Tseng, P.
\newblock Convergence of a block coordinate descent method for
  nondifferentiable minimization.
\newblock \emph{Journal of optimization theory and applications}, 109\penalty0
  (3):\penalty0 475--494, 2001.

\bibitem[Tseng \& Bertsekas(1987)Tseng and Bertsekas]{tseng1987relaxation}
Tseng, P. and Bertsekas, D.~P.
\newblock Relaxation methods for problems with strictly convex separable costs
  and linear constraints.
\newblock \emph{Mathematical Programming}, 38\penalty0 (3):\penalty0 303--321,
  1987.

\bibitem[Vaswani et~al.(2018)Vaswani, Bach, and Schmidt]{vaswani2018fast}
Vaswani, S., Bach, F., and Schmidt, M.
\newblock Fast and faster convergence of {SGD} for over-parameterized models
  and an accelerated perceptron.
\newblock \emph{arXiv preprint arXiv:1810.07288}, 2018.

\bibitem[Vaswani et~al.(2019)Vaswani, Mishkin, Laradji, Schmidt, Gidel, and
  Lacoste-Julien]{vaswani2019painless}
Vaswani, S., Mishkin, A., Laradji, I., Schmidt, M., Gidel, G., and
  Lacoste-Julien, S.
\newblock Painless stochastic gradient: Interpolation, line-search, and
  convergence rates.
\newblock \emph{arXiv preprint arXiv:1905.09997}, 2019.

\bibitem[Von~Neumman(1951)]{vonfunctional}
Von~Neumman, J.
\newblock Functional operators ii, the geometry of orthogonal spaces.
\newblock \emph{Annals of Math. studies}, 22, 1951.

\bibitem[Ward et~al.(2019)Ward, Wu, and Bottou]{ward2019adagrad}
Ward, R., Wu, X., and Bottou, L.
\newblock Adagrad stepsizes: Sharp convergence over nonconvex landscapes.
\newblock In \emph{International Conference on Machine Learning}, pp.\
  6677--6686, 2019.

\bibitem[Weese(1993)]{weese1993regularization}
Weese, J.
\newblock A regularization method for nonlinear ill-posed problems.
\newblock \emph{Computer Physics Communications}, 77\penalty0 (3):\penalty0
  429--440, 1993.

\bibitem[Yao et~al.(2007)Yao, Rosasco, and Caponnetto]{yao2007early}
Yao, Y., Rosasco, L., and Caponnetto, A.
\newblock On early stopping in gradient descent learning.
\newblock \emph{Constructive Approximation}, 26\penalty0 (2):\penalty0
  289--315, 2007.

\bibitem[Zhang et~al.(2008)Zhang, Rivard, and Rogge]{zhang2008successive}
Zhang, J., Rivard, B., and Rogge, D.
\newblock The successive projection algorithm ({SPA}), an algorithm with a
  spatial constraint for the automatic search of endmembers in hyperspectral
  data.
\newblock \emph{Sensors}, 8\penalty0 (2):\penalty0 1321--1342, 2008.

\end{thebibliography}
\bibliographystyle{icml2020}
 \clearpage
 \appendix
 
\section{Primal-Dual Structure}\label{app:primal-dual}
Apart from the notations discussed in the main paper, we would further use the following notation for data matrix $X \in \mathbb{R}^{n\times d}$ such that  $X = \left[x_1^\top ; \cdots ; x_n^\top \right]$.  We consider the following general primal and its corresponding dual problem  which appear very frequently in machine learning domain. 
\begin{align}
&\min_{\theta \in \mathbb{R}^d } ~ \Bigg[ \mathcal{O}_{P}(\theta):=\psi(\theta)  + \frac{1}{n}\sum_{i=1}^{n} \phi_i(x_i^\top\theta)   \Bigg] \label{eq:primal_gen_erm} \\
&\max_{\alpha \in \mathbb{R}^n}~ \Bigg[ \mathcal{O}_{D}(\alpha):= - \psi^*\left(- \frac{1}{n} X^\top\alpha \right) - \frac{1}{n}\sum_{i=1}^n \phi_i^* (\alpha_i) \Bigg]. \label{eq:dual_gen_erm}
\end{align}
Here, we assume that $\phi:\mathbb{R}^d\rightarrow \mathbb{R}$ and $\psi:\mathbb{R}^d\rightarrow \mathbb{R}$ are smooth convex function for all $i$.
 We have the following first order optimality conditions for the equivalent problems given in Equations~\eqref{eq:primal_gen_erm} and \eqref{eq:dual_gen_erm}:
\begin{equation} \label{eq:optimality_cond}
  \begin{split}
    &x_i^\top \theta \in \partial \phi_i^*(\alpha_i),\\
    &\theta \in \partial \psi^*\left(-\frac{1}{n}X^\top \alpha\right), 
  \end{split}
\quad \text{and}\quad
  \begin{split}
     &\alpha_i \in \partial \phi_i(x_i^\top \theta),\\
    &-\frac{1}{n}X^\top \alpha  \in \partial \psi(\theta).
  \end{split}
\end{equation}

From the duality, $\theta(\alpha) = \partial \psi^*\left( -\frac{1}{n}\sum_{i=1}^n \alpha_i  x_i\right)$. We can 
 recall Fenchel's Inequality: For any convex function $f$, the inequality $f(x)+f^*(\theta) \geq x^\top \theta$ holds for all $x \in {\rm dom}(f)$ and $\theta\in {\rm dom}(f^*)$. Equality holds if the following is satisfied $\theta \in \partial f(x)$. 

From Fenchel's inequality, we have:

\begin{proposition} \label{prop:dual_sub_opt_divergence_gen}
Consider the general primal dual problem given in equations~\eqref{eq:primal_gen_erm} and \eqref{eq:dual_gen_erm},  dual sub-optimlaity gap $gap(\alpha) = [\mathcal{O}_{D}(\alpha^\star) - \mathcal{O}_{D}(\alpha)]$ at some $\alpha$ provides the upper bound on the Bregman divergence of $\psi$ between $\theta^\star$ and $\theta(\alpha)$ \textit{i.e.} $D_\Psi( \theta^\star, \theta(\alpha ))  \leqslant {\rm gap}(\alpha ).$
\end{proposition}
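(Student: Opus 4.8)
The plan is to prove the stronger identity that $\mathrm{gap}(\alpha) - D_\Psi(\theta^\star,\theta(\alpha))$ equals a sum of Fenchel--Young slacks for the functions $\phi_i$, each of which is nonnegative; the proposition then follows immediately. First I would invoke strong duality --- which holds here because of the feasibility and $\mu$-strong-convexity assumptions on $\psi$ --- to replace $\mathcal{O}_{D}(\alpha^\star)$ by $\mathcal{O}_{P}(\theta^\star)$, so that
\[
\mathrm{gap}(\alpha) = \psi(\theta^\star) + \frac{1}{n}\sum_{i=1}^n \phi_i(x_i^\top\theta^\star) + \psi^*\Big(-\tfrac{1}{n}X^\top\alpha\Big) + \frac{1}{n}\sum_{i=1}^n \phi_i^*(\alpha_i).
\]

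Next I would exploit the defining relation $\theta(\alpha) = \nabla\psi^*(-\tfrac{1}{n}X^\top\alpha)$ in two ways. On one hand it gives $\psi'(\theta(\alpha)) = -\tfrac{1}{n}X^\top\alpha$; on the other, because this is exactly the point at which Fenchel--Young holds \emph{with equality} for the pair $(\psi,\psi^*)$, it yields $\psi^*(-\tfrac{1}{n}X^\top\alpha) = -\tfrac{1}{n}\alpha^\top X\theta(\alpha) - \psi(\theta(\alpha))$. I would substitute this equality into the gap expression above, and separately expand the Bregman divergence as $D_\Psi(\theta^\star,\theta(\alpha)) = \psi(\theta^\star) - \psi(\theta(\alpha)) - \psi'(\theta(\alpha))^\top(\theta^\star-\theta(\alpha))$, again using $\psi'(\theta(\alpha)) = -\tfrac{1}{n}X^\top\alpha$.

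Subtracting the two expressions, the terms $\psi(\theta^\star)$, $\psi(\theta(\alpha))$ and every occurrence of $\tfrac{1}{n}\alpha_i(x_i^\top\theta(\alpha))$ cancel, leaving
\[
\mathrm{gap}(\alpha) - D_\Psi(\theta^\star,\theta(\alpha)) = \frac{1}{n}\sum_{i=1}^n \Big[\phi_i(x_i^\top\theta^\star) + \phi_i^*(\alpha_i) - \alpha_i (x_i^\top\theta^\star)\Big].
\]
Applying the Fenchel--Young inequality to each $\phi_i$ at the pair $(x_i^\top\theta^\star,\alpha_i)$ shows every bracketed term is nonnegative, which gives $D_\Psi(\theta^\star,\theta(\alpha)) \leqslant \mathrm{gap}(\alpha)$.

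The computation itself is essentially bookkeeping; the one point requiring care is the asymmetric use of Fenchel--Young. For the regularizer $\psi$ I must use it as an \emph{equality} (valid precisely because $\theta(\alpha)$ is defined as $\nabla\psi^*$ of the dual argument), whereas for the losses $\phi_i$ I use it as an \emph{inequality} evaluated at the mismatched pair $(x_i^\top\theta^\star,\alpha_i)$ --- it is this slack, and nothing else, that produces the gap upper bound. I expect this to be the only conceptually delicate step; if one instead wished to drop differentiability of $\psi$, the same argument goes through verbatim after replacing $\psi'(\theta(\alpha))$ by the subgradient $-\tfrac{1}{n}X^\top\alpha \in \partial\psi(\theta(\alpha))$, as noted after Proposition~\ref{prop:gen_result_primal_dual}.
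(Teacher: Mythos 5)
Your proof is correct, and it takes a route that differs from the paper's in a meaningful way. The paper never invokes strong duality as such: it expands $\mathcal{O}_D(\alpha^\star)$ directly and uses the primal--dual optimality conditions of \eqref{eq:optimality_cond} twice --- Fenchel--Young equality for $\psi$ at the pair $(\theta^\star, -\tfrac{1}{n}X^\top\alpha^\star)$, and $x_i^\top\theta^\star = \nabla\phi_i^*(\alpha_i^\star)$ --- to arrive at the identity ${\rm gap}(\alpha) = D_\Psi(\theta^\star,\theta(\alpha)) + \tfrac{1}{n}\sum_i D_{\phi_i^*}(\alpha_i,\alpha_i^\star)$. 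You instead trade $\mathcal{O}_D(\alpha^\star)$ for $\mathcal{O}_P(\theta^\star)$ up front, after which no property of $\alpha^\star$ is ever used; your residual $\tfrac{1}{n}\sum_i\bigl[\phi_i(x_i^\top\theta^\star)+\phi_i^*(\alpha_i)-\alpha_i (x_i^\top\theta^\star)\bigr]$ is in fact numerically equal to the paper's $\tfrac{1}{n}\sum_i D_{\phi_i^*}(\alpha_i,\alpha_i^\star)$ (apply Fenchel--Young with equality at the optimal pair $(x_i^\top\theta^\star,\alpha_i^\star)$), so the two proofs establish the same identity in different clothes. What your version buys: the residual makes nonnegativity evident from the bare Fenchel--Young inequality, with no differentiability of $\phi_i^*$ at $\alpha^\star$ required --- a genuine advantage given that in the paper's main application (Proposition~\ref{prop:gen_result_primal_dual}) the $\phi_i^*$ are support functions, typically nondifferentiable. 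What it costs: your first step truly needs strong duality as an \emph{equality}; weak duality alone gives ${\rm gap}(\alpha)\leqslant \mathcal{O}_P(\theta^\star)-\mathcal{O}_D(\alpha)$, which points the wrong way, so your argument would only bound $D_\Psi(\theta^\star,\theta(\alpha))$ by the primal--dual gap rather than the dual suboptimality. This step is justified here (finite-valued smooth convex $\phi_i$, $\psi$ in the setting of \eqref{eq:primal_gen_erm}--\eqref{eq:dual_gen_erm}, or feasibility plus existence of $\alpha^\star$ in the main text's setting), and it is of the same logical strength as the KKT conditions the paper leans on, but it deserves the explicit justification you gave rather than being taken for granted.
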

\begin{proof}
 The Bregman divergence with respect to mirror map $\psi$ is 
\begin{align*}
D_\Psi(x,y) = \psi(x) - \psi(y) - \langle \nabla \psi(y), x - y \rangle.
\end{align*}
Now, we have: 
\begin{align}
\begin{split}
{\rm gap}(\alpha) &=  - \psi^*\left(-\frac{1}{n} X^\top\alpha^\star\right)  +  \psi^*\left(- \frac{1}{n} X^\top\alpha \right) - \frac{1}{n}\sum_{i =1}^n \phi_i^* (\alpha_i^\star) +\frac{1}{n} \sum_{i =1}^n \phi_i^* (\alpha_i)  .
\end{split}
\end{align}
In the proof we would again use Fenchel's inequality which we used in the proof of previous theorem. 
From the optimality condition, we know that $-\frac{1}{n}X^\top\alpha  \in \partial \psi(\theta(\alpha))$. Hence, 

Hence, 
\begin{align}
{ \rm gap}(\alpha) &=  - \psi^*\left(- \frac{1}{n}X^\top\alpha^\star\right)  +  \psi^*\left(- \frac{1}{n}X^\top\alpha\right) - \frac{1}{n}\sum_{i =1}^n \phi_i^* (\alpha_i^\star) + \frac{1}{n}\sum_{i =1}^n \phi_i^* (\alpha_i) \notag  \\
&= - \left( -\left\langle \frac{1}{n}X^\top\alpha^\star, \theta^\star \right\rangle - \psi(\theta^\star) \right) +  \left( -\left\langle \frac{1}{n} X^\top\alpha, \theta(\alpha) \right\rangle - \psi(\theta(\alpha)) \right)   - \frac{1}{n}\sum_{i =1}^n \phi_i^* (\alpha_i^\star) + \frac{1}{n}\sum_{i =1}^n \phi_i^* (\alpha_i) \notag  \\
&= \psi(\theta^\star)  - \psi(\theta(\alpha))  + \left\langle \frac{1}{n}X^\top\alpha^\star, \theta^\star \right\rangle - \left\langle \frac{1}{n} X^\top\alpha, \theta(\alpha) \right\rangle  - \frac{1}{n}\sum_{i =1}^n \phi_i^* (\alpha_i^\star) + \frac{1}{n} \sum_{i =1}^n \phi_i^* (\alpha_i)\notag  \\
&= \psi(\theta^\star)  - \psi(\theta(\alpha))  + \left\langle \frac{1}{n} X^\top\alpha^\star, \theta^\star \right\rangle - \left\langle  \frac{1}{n} X^\top \alpha, \theta(\alpha) \right\rangle  - \frac{1}{n}\sum_{i =1}^n \phi_i^* (\alpha_i^\star) + \frac{1}{n}\sum_{i =1}^n \phi_i^* (\alpha_i) \notag  \\
&= \psi(\theta^\star)  - \psi(\theta(\alpha))  + \left\langle \frac{1}{n}X^\top\alpha^\star, \theta^\star \right\rangle+\left\langle  \frac{1}{n}X^\top\alpha, \theta^\star \right\rangle - \left\langle  \frac{1}{n}X^\top\alpha, \theta^\star \right\rangle- \left\langle \frac{1}{n} X^\top\alpha, \theta(\alpha) \right\rangle \notag \\
&\qquad \qquad \qquad \qquad \qquad \qquad \qquad \qquad  \qquad  \qquad   - \frac{1}{n}\sum_{i =1}^n \phi_i^* (\alpha_i^\star) + \frac{1}{n}\sum_{i =1}^n \phi_i^* (\alpha_i) \notag  \\
&= \psi(\theta^\star)  - \psi(\theta(\alpha))  - \left\langle  \frac{1}{n}X^\top\alpha, \theta(\alpha) - \theta^\star\right\rangle + \left\langle \frac{1}{n} X^\top\alpha^\star - \frac{1}{n} X^\top\alpha, \theta^\star \right\rangle -\frac{1}{n} \sum_{i =1}^n \phi_i^* (\alpha_i^\star) + \frac{1}{n}\sum_{i =1}^n \phi_i^* (\alpha_i) \notag \\
&= \underbrace{\psi(\theta^\star)  - \psi(\theta(\alpha))  - \langle \nabla \psi(\theta(\alpha)), \theta^\star - \theta(\alpha) \rangle }_{:=D_{\Psi}(\theta^\star , \theta(\alpha))}+ \left\langle \frac{1}{n} X^\top\alpha^\star - \frac{1}{n} X^\top \alpha, \theta^\star \right\rangle \notag \\
& \qquad \qquad \qquad \qquad \qquad \qquad \qquad \qquad \qquad \qquad \qquad  - \frac{1}{n}\sum_{i =1}^n \phi_i^* (\alpha_i^\star) +\frac{1}{n} \sum_{i =1}^n \phi_i^* (\alpha_i) \notag  \\
&= D_{\Psi}(\theta^\star , \theta(\alpha)) + \left\langle \frac{1}{n} \alpha^\star -  \frac{1}{n} \alpha, X \theta^\star \right\rangle - \frac{1}{n}\sum_{i =1}^n \phi_i^* (\alpha_i^\star) + \frac{1}{n}\sum_{i =1}^n \phi_i^* (\alpha_i) \notag  \\
&= D_{\Psi}(\theta^\star , \theta(\alpha)) + \frac{1}{n} \sum_{i=1}^n(\alpha_i^\star - \alpha_i) \cdot x_i^\top\theta^\star   - \frac{1}{n}\sum_{i =1}^n \phi_i^* (\alpha_i^\star) + \frac{1}{n} \sum_{i =1}^n \phi_i^* (\alpha_i) \notag  \\
&= D_{\Psi}(\theta^\star , \theta(\alpha)) - \frac{1}{n} \sum_{i=1}^n(\alpha_i- \alpha_i^\star ) \cdot \nabla \phi^*(\alpha_i^\star)   - \frac{1}{n}\sum_{i =1}^n \phi_i^* (\alpha_i^\star) + \frac{1}{n}\sum_{i =1}^n \phi_i^* (\alpha_i)\notag   \\
& =D_{\Psi}(\theta^\star , \theta(\alpha))  + \frac{1}{n} \sum_{i=1}^n D_{\phi_i^*}( \alpha_i , \alpha_i^\star)  \geq D_{\Psi}(\theta^\star , \theta(\alpha)) .
\end{align}

\end{proof}
After we provide the general result in Proposition~\ref{prop:dual_sub_opt_divergence_gen}, we now provide the proof for proposition~\ref{prop:gen_result_primal_dual} below.  The result in statement is a useful result and can be useful in several ways. For example, the guarantees for SDCA~\citep{shalev2013stochastic,shalev2014accelerated}. We provide the details in the Appendix~\ref{app:SDCA}.

\begin{proof}[Proof of Proposition~\ref{prop:gen_result_primal_dual}]
We can just use the result in Proposition~\ref{prop:dual_sub_opt_divergence_gen} to prove Proposition~\ref{prop:gen_result_primal_dual}.  Let's recall once again the primal dual formulation of the problem which we have in Equation~\eqref{eq:primal_eq} and Equation~\eqref{eq:dual_eq}.
\begin{align}
  &\min_{\theta \in \rb^d} D_\psi(\theta, \theta^{(0)}) \mbox{ such that }  \forall i \in \{1,\dots,n\}, \ x_i^\top \theta \in \mathcal{Y}_i \label{eq:primal_eq_app} \\
  =&\min_{\theta \in \rb^d}  \psi(\theta) + \frac{1}{n} \sum_{i=1}^n \max_{\alpha_i \in \rb^k} \Big\{  \alpha_i^\top x_i^\top \theta - \sigma_{\mathcal{Y}_i}(\alpha_i) \Big\} \notag
\\
 =&\max_{\forall i, \ \alpha_i \in \rb^k} - \frac{1}{n} \sum_{i=1}^n  \sigma_{\mathcal{Y}_i}(\alpha_i)  - \psi^\star\Big(
-\frac{1}{n} \sum_{i=1}^n x_i \alpha_i \label{eq:dual_eq_app} 
\Big) \\
=&\max_{\alpha \in \rb^{n \times k} } G(\alpha) \notag,
 \end{align}
Let $\mathcal{K}_i$ represents that set for all $\theta$ such that $x_i^\top \theta \in \mathcal{Y}_i$ and the indicator function $\iota_{\mathcal{K}_i}$ for a convex set $\mathcal{K}_i$ for all $\in \{1,\dots,n\}$ is defined as $\iota_{\mathcal{K}_i}(x_i^\top \theta) = 0 $ if $x_i^\top \theta \in \mathcal{Y}_i$ and $\iota_{\mathcal{K}_i}(x_i^\top \theta) = +\infty$, otherwise for all $\in \{1,\dots,n\}$. We can write Equation~\eqref{eq:primal_eq_app} in the form of generalized equation given in Equation~\eqref{eq:primal_gen_erm} considering $\phi_{i}(x_i^\top \theta) = \iota_{\mathcal{K}_i}(x_i^\top \theta)$. It is easy to see that $\phi_i^*({\alpha_i}) = \sigma_{\mathcal{Y}_i}(\alpha_i)$. Hence, now the statement follows from Proposition~\ref{prop:dual_sub_opt_divergence_gen}.
\end{proof}

\subsection{Coordinate Descent Update: Proof of Lemma~\ref{lem:RCD_update}} \label{app:coord_descent_update}
We have:

\begin{align}
  \alpha^{(t)}_{i(t)} & =   \arg\max_{\alpha_{i(t)}} - \frac{1}{n} \sigma_{\mathcal{Y}_{i(t)}}(\alpha_i) + \frac{1}{n}  \nabla \psi^\star\Big(
- \frac{1}{n} \sum_{i=1}^n x_i \alpha_i^{(t-1)}
\Big)^\top x_{i(t)}  [ \alpha_{i(t)}  - \alpha^{(t-1)}_{i(t)} ] -  \frac{L_{i(t)}}{2n^2} \|  \alpha_i - \alpha^{(t-1)}_{i(t)}\|_2^2
\notag \\
& =   \arg\max_{\alpha_{i(t)}}  -\frac{1}{n}  \sigma_{\mathcal{Y}_{i(t)}}(\alpha_i) + \frac{1}{n}  \theta(\alpha^{(t-1)})^\top x_{i(t)}  [ \alpha_{i(t)}  - \alpha^{(t-1)}_{i(t)} ] -  \frac{L_{i(t)}}{2n^2} \|  \alpha_i - \alpha^{(t-1)}_{i(t)}\|_2^2  \notag\\
& =   \arg\min_{\alpha_{i(t)}}  \sigma_{\mathcal{Y}_{i(t)}}(\alpha_i)  +  \frac{L_{i(t)}}{2n} \|  \alpha_i - \alpha^{(t-1)}_{i(t)} - \frac{n}{L_{i(t)}} x_{i(t)} ^\top \theta(\alpha^{(t-1)}) \|_2^2. \label{eq:update_rcd_general}
\end{align}

The minimization problem in Equation~\eqref{eq:update_rcd_general} can be written as follows:
\begin{align}
    \begin{split}
      &\min_{\alpha_{i(t)}}  \left[\sigma_{\mathcal{Y}_{i(t)}}(\alpha_i)  +  \frac{L_{i(t)}}{2n} \|  \alpha_i - \alpha^{(t-1)}_{i(t)} - \frac{n}{L_{i(t)}} x_{i(t)} ^\top \theta(\alpha^{(t-1)}) \|_2^2\right] \\
       = &\min_{\alpha_{i(t)}}  \left[\sigma_{\mathcal{Y}_{i(t)}}(\alpha_i)  - \sup_{z}\left[ (\alpha_i - \alpha^{(t-1)}_{i(t)} - \frac{n}{L_{i(t)}} x_{i(t)} ^\top \theta(\alpha^{(t-1)}))^\top z + \frac{n}{2L_{i(t)}}\| z\|^2 \right] \right] \\
       = & \sup_{z\in \mathcal{Y}_{i(t)}}  \left[ - \frac{n}{2L_{i(t)}}\| z\|^2 + z^\top\left (\frac{n}{L_{i(t)}} x_{i(t)} ^\top \theta(\alpha^{(t-1)}) + \alpha^{(t-1)}_{i(t)} \right) \right]\\
    \end{split}
\end{align}
The above maximization problem has a solution at $z^\star = \Pi_{\mathcal{Y}_{i(t)}}\left(x_{i(t)} ^\top \theta(\alpha^{(t-1)}) + \frac{L_{i(t)}}{n}\alpha_{i(t)}^{(t-1)} \right)$. However, $z^\star$ is also the solution of the following optimization formulation:
\begin{align*}
    z^\star = \arg\max_{z}\left[ (\alpha_i - \alpha^{(t-1)}_{i(t)} - \frac{n}{L_{i(t)}} x_{i(t)} ^\top \theta(\alpha^{(t-1)}))^\top z + \frac{n}{2L_{i(t)}}\| z\|^2 \right]
\end{align*}
 
 Comparing both the value of $z^\star$, we get the following  update in $\alpha_{i(t)}$ in alternative form
 $$
 \alpha_{i(t)} = \alpha^{(t-1)}_{i(t)} +  \frac{n}{L_{i(t)}} x_{i(t)} ^\top \theta(\alpha^{(t-1)})
 -  \frac{n}{L_{i(t)}}\Pi_{\Y_i} \Big( \frac{L_{i(t)}}{n}  \alpha^{(t-1)}_{i(t)} +  x_{i(t)} ^\top \theta(\alpha^{(t-1)}) \Big),
 $$
 where $\Pi_{\Y_i}$ is the orthogonal projection on $\Y_i$.

 \subsection{Mirror Descent: [Proof of Theorem~\ref{thm:sharan_mirror_desc}]} \label{app:mirror_descent_sharan}

 The convergence rate does depend on $\psi(\theta^\star)$ but this is not an explicit regularization. The proof goes as follows:

Mirror descent with the mirror map $\psi $ selects $i(t)$ at random and the iteration is
$$
\psi'(\theta^{(t)}) = \psi'(\theta^{(t-1)}) - \gamma x_{i(t)} ( \Pi_{\mathcal{Y}_i}(  x_{i(t)}^\top \theta^{(t-1)} ) - x_{i(t)}^\top \theta^{(t-1)} ) .
$$
Following the proof of \citet{flammarion2017stochastic}, we have for any $\theta \in \rb^d$:
\BEAS
D_\psi(\theta,\theta^{(t)})
& = & D_\psi(\theta,\theta^{(t)}) - D_\psi(\theta^{(t)},\theta^{(t-1)}) + \gamma f'_t(\theta^{(t-1)})^\top ( \theta^{(t)} - \theta) \\
& \leqslant & D_\psi(\theta,\theta^{(t)}) - \frac{\mu}{2} \| \theta^{(t)}- \theta^{(t-1)}\|^2 + \gamma f'_t(\theta^{(t-1)})^\top ( \theta^{(t-1)} - \theta) \\ & & + \gamma \| f'_t(\theta^{(t-1)})\|_\star \| \theta^{(t-1)} - \theta^{(t)}\| \\
& \leqslant & D_\psi(\theta,\theta^{(t)}) -    \gamma f'_t(\theta^{(t-1)})^\top ( \theta^{(t-1)} - \theta) + \frac{\gamma^2}{2\mu} \| f'_t(\theta^{(t-1)})\|_\star^2.
\EEAS
For $\theta = \theta^\star$ and using 
$\E \big[ \| f'_t(\theta^{(t-1)})\|_\star^2 \big] \leqslant \sup_{i}  \| x_i\|_{ 2 \to \star }^2  \big[ f(\theta) - f(\theta^\star) \big]$, we get
and taking expectations, we get:
\BEAS
\big( 1 - \gamma \frac{   \| x_i\|_{ 2 \to \star }^2 }{2\mu} \big)\E \big[ f(\theta^{(t-1)}) - f(\theta^\star) \big] \leqslant \frac{1}{\gamma}\Big( \E[ D_\psi(\theta^\star,\theta^{(t)})  ] - \E\big[ D_\psi(\theta^\star,\theta^{(t-1)})  \big] \Big).
\EEAS
Thus, with $\gamma = \mu / \sup_{i} \| x_i\|_{ 2 \to \star }^2$, we get
$$
\E \big[ f(\theta^{(t-1)}) - f(\theta^\star) \big]\leqslant \frac{2}{\gamma}\Big( \E[ D_\psi(\theta^\star,\theta^{(t)})  ] - \E\big[ D_\psi(\theta^\star,\theta^{(t-1)})  \big] \Big).
$$  
 This leads to
 $$
\E \big[  f(\bar{\theta}_t) - f(\theta^\star) \big]
\leqslant \frac{2}{\gamma t} D_\psi(\theta^\star,\theta^{(0)}) .
 $$

\section{$\ell_p$-perceptron} 
\label{app:l_p_percept}

We start with the proof of Lemma ~\ref{lem:mistake_l_p}.

\begin{proof}
For all $i$, $x_i^\top \theta^\star\geq 1$. Hence,
\begin{align*}
    x_i^\top \theta_t &= x_i^\top \theta_t -  x_i^\top \theta^\star + x_i^\top \theta^\star   = x_i^\top \theta^\star  - x_i^\top (\theta^\star  - \theta_t) \\
    &\geq 1 - x_i^\top (\theta^\star  - \theta_t) \geq 1 - \| x_i\|_q \|\theta_t - \theta^\star\|_p \\
    &\geq 1 - R \|\theta_t - \theta^\star\|_p.
\end{align*}
Assuming $\alpha_0  = 0$, from Equation \eqref{eq:acc_percet_p}, we have
\begin{align*}
     \E\Big[\| \theta_t- \theta^\star\|_p\Big] \leq \frac{    2\sqrt{2}      \max_i \|x_i \|_q}{\sqrt{(p-1)}t} \sqrt{\frac{G(\alpha^\star) - G(0)}{ \max_i \|x_i \|_q} + \frac{1}{2}\| \alpha^\star\|^2}
\end{align*}
Now for on average for no mis-classification for all $i \in \{ 1, \cdots, n\}$,
\begin{align}
    1 \geq R \E\Big[\|\theta_t - \theta^\star\|_p\Big] \Rightarrow t \geq \frac{2\sqrt{2}R^2 }{\sqrt{p-1}  } \sqrt{\frac{G(\alpha^\star) - G(0)}{ R} + \frac{1}{2}\| \alpha^\star\|^2}.
\end{align}

\end{proof}

\paragraph{Mistake Bound $\ell_p$-primal perceptron.} If we apply mirror descent with the mirror map $\psi = \frac{1}{2} \| \cdot \|_p^2$ to the minimization of $\frac{1}{n} \sum_{i=1}^n ( 1 - \theta^\top x_i)_+$, then the iteration is
$$
\psi'(\theta_{t}) = \psi'(\theta_{t-1}) - \gamma 1_{ 1 - \theta_{t-1}^\top x_{i(t)} > 0} x_{i(t)},
$$
and we have
$$
\frac{1}{n} \sum_{i=1}^n ( 1 - \bar{\theta}_t^\top x_i)_+ \leqslant  \frac{\| \theta_\star\|_p^2}{2\gamma t }
+ \gamma \frac{\max_{i} \| x_i \|_q^2 }{2(p-1)}.
$$
The best $\gamma$ is equal to 
$\gamma = \frac{ \| \theta_\star\|_p }{ \max_{i} \| x_i\|_q} \frac{\sqrt{p-1}}{\sqrt{t}}$, which does depend on too many things, and leads to a proportion of mistakes on the training set less than
$$
 \frac{ \| \theta_\star\|_p  \max_{i} \| x_i\|_q  } {\sqrt{p-1} \sqrt{t}}.
$$

\subsection{Update for Random Coordinate Descent}

We have:
\BEAS
 & & \min_{ \theta \in \rb^d} \frac{1}{2} \| \theta \|_p^2 \mbox{ such that } X\theta \geqslant 1 \\
 & = &  \min_{ \theta \in \rb^d} \max_{\alpha \in \rb^n} \frac{1}{2} \| \theta \|_p^2 + \alpha^\top ( 1 - X\theta) \\
 & = &  \max_{\alpha \in \rb^n}  - \frac{1}{2} \| X^\top \alpha \|_q^2 + \alpha^\top  1  ,
 \EEAS
 where, at optimality, $\theta $  can be obtained from $X^\top \alpha$ as 
 $$
 \theta_j = \| X^\top \alpha \|_q^{2-q}  (X^\top \alpha)_j^{q-1}
  ,$$
   where we define $u^{q-1} = |u|^{q-1}  {\rm sign}(u)$.
   
    The function $ \frac{1}{2} \| X^\top \alpha \|_p^2$ is smooth, and the regular smoothness constant with respect to the $i$-th variable which is less than 
 $$
 L_i = \frac{1}{p-1} \| x_i \|_q^2.
 $$
 A dual coordinate ascent step corresponds to choosing $i(t)$ and replacing $(\alpha_{t-1})_{i(t)}$ by 
 $$(\alpha_t)_i = \max \Big\{ 0, (\alpha_{t-1})_{i(t)} + \frac{1}{L_{i(t)}} 
 \big( 1  - \| X^\top \alpha_{t-1} \|_q^{2-q} \sum_{j=1}^d  [ (X^\top \alpha_{t-1})_j ]^{q-1} X_{i(t)j}
 \Big\},$$
 which can be interpreted as:
 $$
 (\alpha_t)_i = \max \Big\{ 0, (\alpha_{t-1})_{i(t)} + \frac{1}{L_{i(t)}} 
 \big( 1- \theta_{t-1}^\top x_{i(t)} \big)
 \Big\}.
 $$
 
 \subsection{$\ell_2$-perceptron} \label{app:l_2_percept}

 The primal problem has the following dual form under the interpolation regime
\begin{align*}
\max_{\alpha \geq 0, \alpha \in \mathbb{R}^n } ~ \alpha^\top {1} - \frac{1}{2} \| X \alpha \|^2 .
\end{align*}
We denote $S_v$ as the set of support vectors \textit{i.e.} $S_v$ is the set of indices where $\alpha_j^\star \neq 0$. Hence, we also have $\tilde x_j ^\top \theta^\star = 1$ for $j \in S_v $. $\alpha_{S_v}$ denotes the vector of non-zero entries in $\alpha$. Correspondingly, ${X}_{S_v}$ denotes the feature matrix for support vectors. From the first order suboptimality condition we have, 
\begin{align*}
\theta(\alpha)  = \frac{1}{n}X \alpha.
\end{align*}
We also know that for support vectors, $y_i\cdot x_i^\top \theta^\star = \tilde x_i^\top \theta^\star = 1$ for all $i \in S_v$. Also $\theta^\star = \frac{1}{n} X_{S_v} \alpha^\star_{S_v}$. Hence, 
\begin{align*}
\frac{1}{n}X_{S_v}^\top X_{S_v} \alpha^\star_{S_v} = {1} \Rightarrow \alpha^\star_{S_v}  = n (X_{S_v}^\top X_{S_v})^{-1} {1}.
\end{align*}
From Lemma~\ref{lem:mistake_l_p}, we should have  $t \geq  ~ \frac{2\sqrt{2}R^2 }{\sqrt{p-1}  } \sqrt{\frac{G(\alpha^\star) - G(0)}{ R} + \frac{1}{2}\| \alpha^\star\|^2} $, for no training mistakes. 

We now use Corollary~\ref{cor:percept_sharna} to get mistake bound on the perceptron. To have no mistakes on average, the proportion of mistakes should be less than $1/n$. Hence, 
\begin{align}
\frac{R \|\theta^\star \|}{\sqrt{t}} \leq \frac{1}{n} \Rightarrow t \geq R^2 \| \theta^\star\|^2 n^2.   \label{eq:sgd_mistake_cond}
\end{align}
We already have $\alpha^\star_{S_v}  = n (X_{S_v}^\top X_{S_v})^{-1} {1}$. 
\begin{align*}
\theta^\star = \frac{1}{n}X \alpha^\star = \frac{1}{n} X_{S_v} \alpha^\star_{S_v} = X_{S_v}  (X_{S_v}^\top X_{S_v})^{-1} {1}.
\end{align*}
Finally we have the following:
\begin{align}
\begin{split}
\|\alpha^\star \| &= \| \alpha_{S_v}^\star\| = n \|(X_{S_v}^\top X_{S_v})^{-1} {1}\| \\
\|\theta^\star\|^2 &= \| X_{S_v}  (X_{S_v}^\top X_{S_v})^{-1} {1}\|^2 = {1}^\top  (X_{S_v}^\top X_{S_v})^{-1} {1}.
\end{split}
\end{align}
Hence, one can compare the number of minimum iteration required by both the approaches.

\section{(Accelerated) Stochastic Dual Coordinate Descent} \label{app:SDCA}

Stochastic dual coordinate ascent \citep{shalev2013stochastic} is a popular approach to optimize regularized empirical risk minimize problem. For this section, let $\phi_1, \cdots,\phi_n$ be a sequence of $\frac{1}{\gamma}$-smooth convex losses and let $\lambda > 0$ be a regularization parameter then consider following regularized empirical risk minimization problem:
\begin{align}
\min_{\theta \in \mathbb{R^d} } \Bigg[ \mathcal{S}_{P}(\theta):= \frac{\lambda}{2} \| \theta\|^2  + \frac{1}{n}\sum_{i=1}^{n} \phi_i(X_i^\top\theta)   \Bigg]. \label{eq:primal_gen_erm_sdca}
\end{align}
Corresponding dual problem of the minimization problem given in equation~\eqref{eq:primal_gen_erm_sdca} can be written similarly as:
\begin{align}
\max_{\alpha \in \mathbb{R}^n}~& \Bigg[ \mathcal{S}_{D}(\alpha):= - \frac{\lambda}{2} \| \frac{1}{\lambda n} X^\top\alpha \|^2 - \frac{1}{n}\sum_{i=1}^n \phi_i^* (-\alpha_i) \Bigg] \label{eq:dual_gen_erm_sdca}
\end{align}
There is one to one relation between the smoothness constant and strong convexity parameter of primal and corresponding dual function. We prove the following result from~\citet{kakade2009duality}.
\begin{theorem}[Theorem 6, \citep{kakade2009duality}] 
Assume that $f$ is a closed and convex function. Then $f$ is $\beta$-strongly convex \textit{w.r.t.} a norm $\|\cdot \|$ if  and only if $f^*$ is $\frac{1}{\beta}$-smooth \textit{w.r.t.} the dual norm $\|\cdot \|_{*}$.
\end{theorem}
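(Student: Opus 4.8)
The plan is to prove the two implications separately and then combine them via biconjugation, exploiting throughout the single elementary fact that the Fenchel conjugate of $\frac{c}{2}\|\cdot\|^2$ is $\frac{1}{2c}\|\cdot\|_*^2$. Recall the definitions I will use: $f$ is $\beta$-strongly convex w.r.t.\ $\|\cdot\|$ if $f(z)\geq f(x)+\langle u, z-x\rangle+\frac{\beta}{2}\|z-x\|^2$ for all $z$ and all $u\in\partial f(x)$; and $g$ is $\frac1\beta$-smooth w.r.t.\ $\|\cdot\|_*$ if it is differentiable and $g(v)\leq g(u)+\langle\nabla g(u),v-u\rangle+\frac{1}{2\beta}\|v-u\|_*^2$ for all $u,v$. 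Since $f$ is closed and convex, I may freely use $f^{**}=f$ and the Fenchel--Young correspondence $u\in\partial f(x)\iff x\in\partial f^*(u)\iff f(x)+f^*(u)=\langle x,u\rangle$.

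First I would prove the forward implication: strong convexity of $f$ implies $\frac1\beta$-smoothness of $f^*$. Before estimating, I must check that $f^*$ is finite everywhere and differentiable. The lower quadratic bound on $f$ (with any fixed subgradient at one point) dominates $f$ by a strongly convex quadratic, so $\langle u,\cdot\rangle-f(\cdot)$ is bounded above and strongly concave; hence its supremum, which defines $f^*(u)$, is finite and attained at a \emph{unique} maximizer $x=\nabla f^*(u)$, with $u\in\partial f(x)$. The estimate is then a one-line conjugation: writing $z=x+w$ in $f^*(v)=\sup_z\langle v,z\rangle-f(z)$ and inserting the strong-convexity lower bound on $f(z)$ gives
$$f^*(v)\leq \langle v,x\rangle-f(x)+\sup_w\Big(\langle v-u,w\rangle-\tfrac{\beta}{2}\|w\|^2\Big)=f^*(u)+\langle\nabla f^*(u),v-u\rangle+\tfrac{1}{2\beta}\|v-u\|_*^2,$$
using $f^*(u)=\langle u,x\rangle-f(x)$ and the squared-norm conjugate identity. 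This is exactly the smoothness upper bound.

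Next I would prove the auxiliary implication needed for the converse: if $g$ is $\frac1\beta$-smooth w.r.t.\ $\|\cdot\|_*$, then $g^*$ is $\beta$-strongly convex w.r.t.\ $\|\cdot\|$. The argument mirrors the first. Taking $s\in\partial g^*(a)$, so that $a=\nabla g(s)$ and $g^*(a)=\langle a,s\rangle-g(s)$, I bound $g^*(b)\geq\langle b,z\rangle-g(z)$ from below for every $z=s+w$, now inserting the smoothness \emph{upper} bound on $g(z)$; after simplification the free term is $\sup_w(\langle b-a,w\rangle-\frac{1}{2\beta}\|w\|_*^2)=\frac{\beta}{2}\|b-a\|^2$, yielding $g^*(b)\geq g^*(a)+\langle s,b-a\rangle+\frac{\beta}{2}\|b-a\|^2$, i.e.\ $\beta$-strong convexity of $g^*$. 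Applying this with $g=f^*$ and using $f=f^{**}=(f^*)^*$ gives the converse: smoothness of $f^*$ forces strong convexity of $f$.

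The main obstacle is bookkeeping with the non-Euclidean norm rather than any deep difficulty: I must keep the primal variables paired against $\|\cdot\|$ and the dual variables against $\|\cdot\|_*$, and the only nontrivial input is the identity $\big(\frac{c}{2}\|\cdot\|^2\big)^*=\frac{1}{2c}\|\cdot\|_*^2$, which I would either cite or verify by a short Cauchy--Schwarz/Hölder computation. The one place requiring genuine care is the differentiability of $f^*$ in the forward direction; everywhere else the two directions are literally the same ``plug the quadratic bound into the conjugate and optimize over $w$'' computation, run once downward and once upward.
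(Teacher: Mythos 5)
Your proof is correct, but note that the paper itself contains no proof of this statement: despite the phrase ``We prove the following result,'' the theorem is simply quoted from \citet{kakade2009duality} and used as an imported tool in Appendix~\ref{app:SDCA}, so there is nothing in the paper to compare against step by step. Your argument is the standard conjugate-duality proof of this equivalence (essentially the one in Kakade et al.\ and in Z\u{a}linescu/Rockafellar--Wets): the forward direction by plugging the strong-convexity lower bound into the supremum defining $f^*$ and invoking $\bigl(\tfrac{c}{2}\|\cdot\|^2\bigr)^*=\tfrac{1}{2c}\|\cdot\|_*^2$, the converse by the mirror-image computation for $g=f^*$ together with $f^{**}=f$, and the two nontrivial preliminary checks (finiteness of $f^*$ and its differentiability via uniqueness of the maximizer and the singleton-subdifferential criterion) are handled correctly. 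Three small points are worth tightening if this were to be written out in full: (i) properness of $f$ must be assumed (it is implicit in ``closed and convex'') both to guarantee a point with nonempty subdifferential for your quadratic minorant and to have $f^{**}=f$; (ii) your subgradient-inequality definition of strong convexity is equivalent to the midpoint/interpolation definition used by \citet{kakade2009duality} only for closed proper convex functions, and since the converse direction establishes the inequality only at points where $\partial g^*$ is nonempty, a sentence invoking this equivalence (or closedness of $g^*$) is needed to match their statement exactly; (iii) the identity $\sup_w\bigl(\langle c,w\rangle-\tfrac{1}{2\beta}\|w\|_*^2\bigr)=\tfrac{\beta}{2}\|c\|^2$ uses $\|\cdot\|_{**}=\|\cdot\|$, which is automatic in the finite-dimensional setting of the paper but should be flagged as the one place the argument relies on it.
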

From the above theorem it is clear that $\phi_i^*$ are $\gamma$-strongly convex. Hence the term $\frac{1}{n}\sum_{i=1}^n \phi_i^* (-\alpha_i)$ is $\frac{\gamma}{n}$ strongly convex.  Similary coordinate wise smoothness $L_i  = \frac{\|x_i \|^2}{ \lambda n^2}$.\\ 
Now, just as a direct implication of the result provided in Proposition~\ref{prop:dual_sub_opt_divergence_gen}, we have the convergence result for SDCA~\citep{shalev2013stochastic} and accelerated stochastic dual coordinate ascent~\citep{shalev2014accelerated} which we provide in Corollary~\ref{cor:result_SDCA} and Corollary~\ref{cor:result_Acc-SDCA}. For the next two results, we denote $\theta_k$ as $\theta(\alpha_k)$. 
\begin{cor}[Stochastic Dual Coordinate Ascent] \label{cor:result_SDCA}
Consider the regularized empirical risk minimization problem given in equation~\eqref{eq:primal_gen_erm_sdca}, then if we run SDCA~\citep{shalev2013stochastic} algorithm starting from $\alpha_0 \in \mathbb{R}^n$ with a fix step size $1/\max_i L_i$ where $L_i = \frac{\|x_i \|^2}{ \lambda n^2}$,  primal iterate after $k$ iterations converges as following:
\begin{align*}
\frac{\lambda}{2} \|\theta_{k+1} - \theta^\star \|^2 \leq D(\alpha_{k+1})  \leq  \left( 1 - \frac{\gamma \lambda}{ \max_i \|x_i \|^2}\right)^k (\mathcal{S}_{D}(\alpha_0) - \mathcal{S}_{D}(\alpha^\star)).
\end{align*}
\end{cor}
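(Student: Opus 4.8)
The plan is to read off the left inequality from Proposition~\ref{prop:dual_sub_opt_divergence_gen} and the right inequality from the known linear convergence rate of randomized coordinate ascent on strongly convex objectives, the only real work being to verify that the strong-convexity and coordinate-smoothness constants of the dual \eqref{eq:dual_gen_erm_sdca} combine to give exactly the contraction factor $1 - \gamma\lambda / \max_i \|x_i\|^2$.

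For the left inequality, I specialize Proposition~\ref{prop:dual_sub_opt_divergence_gen} to the regularizer $\psi(\theta) = \frac{\lambda}{2}\|\theta\|^2$. Since the Bregman divergence of a scaled squared Euclidean norm is $D_\Psi(\theta^\star,\theta(\alpha)) = \frac{\lambda}{2}\|\theta^\star - \theta(\alpha)\|^2$, the proposition immediately yields $\frac{\lambda}{2}\|\theta_{k+1}-\theta^\star\|^2 \leq {\rm gap}(\alpha_{k+1})$, where I write $D(\alpha_{k+1}) := {\rm gap}(\alpha_{k+1}) = \mathcal{S}_D(\alpha^\star) - \mathcal{S}_D(\alpha_{k+1})$ for the dual suboptimality. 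The one bookkeeping point is that the SDCA dual \eqref{eq:dual_gen_erm_sdca} carries its conjugates as $\phi_i^*(-\alpha_i)$ rather than $\phi_i^*(\alpha_i)$ as in \eqref{eq:dual_gen_erm}; this is merely the sign reparametrization $\alpha \mapsto -\alpha$, under which Proposition~\ref{prop:dual_sub_opt_divergence_gen} applies verbatim.

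For the right inequality, I collect the two structural constants of the (negated) dual objective $-\mathcal{S}_D$, viewed as a minimization problem. First, since each $\phi_i$ is $\frac{1}{\gamma}$-smooth, the cited duality theorem of~\citet{kakade2009duality} makes each $\phi_i^*$ be $\gamma$-strongly convex, so $\frac{1}{n}\sum_{i=1}^n \phi_i^*(-\alpha_i)$ is $\frac{\gamma}{n}$-strongly convex; the remaining quadratic term is convex, so $-\mathcal{S}_D$ is $\sigma$-strongly convex with $\sigma = \frac{\gamma}{n}$. Second, the $i$-th coordinate of $-\mathcal{S}_D$ is $L_i$-smooth with $L_i = \frac{\|x_i\|^2}{\lambda n^2}$, as recorded just before the statement. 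I then invoke the standard contraction for uniformly randomized coordinate ascent with step size $1/\max_i L_i$ \citep{nesterov2012efficiency,richtarik2014iteration}, namely $\E[{\rm gap}(\alpha_k)] \leq \big(1 - \tfrac{\sigma}{n \max_i L_i}\big)^k {\rm gap}(\alpha_0)$, and substitute the constants: $\frac{\sigma}{n\max_i L_i} = \frac{\gamma/n}{n \cdot \max_i\|x_i\|^2/(\lambda n^2)} = \frac{\gamma\lambda}{\max_i \|x_i\|^2}$, which is exactly the stated factor. Chaining this with the left inequality (both under the expectation over the random coordinate choices) completes the bound.

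The main obstacle is purely in the constant-matching of the third step: one must be careful to use only the $\frac{\gamma}{n}$-strong convexity coming from the conjugate losses, since the quadratic term $\frac{1}{2\lambda n^2}\|X^\top\alpha\|^2$ need not be strongly convex when $X$ is rank-deficient, and to track the factor $n$ from uniform sampling so that the coordinate-descent rate collapses cleanly to $1 - \gamma\lambda/\max_i\|x_i\|^2$. A secondary point is that the contraction holds in expectation over the coordinate draws, so the final chained inequality is properly read as a bound on $\E[\frac{\lambda}{2}\|\theta_{k+1}-\theta^\star\|^2]$.
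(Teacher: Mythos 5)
Your proposal is correct and follows essentially the same route as the paper: the left inequality comes from Proposition~\ref{prop:dual_sub_opt_divergence_gen} specialized to $\psi(\cdot)=\frac{\lambda}{2}\|\cdot\|^2$, and the right inequality comes from the standard linear contraction of randomized coordinate ascent for a $\frac{\gamma}{n}$-strongly convex, coordinate-wise $L_i$-smooth dual, with the same constant matching $\frac{\gamma/n}{n\max_i L_i}=\frac{\gamma\lambda}{\max_i\|x_i\|^2}$ (the paper cites \citet{allen2016even} for this rate where you cite \citet{nesterov2012efficiency} and \citet{richtarik2014iteration}). If anything, your write-up is more careful than the paper's on two points it glosses over: the explicit identification of the Bregman divergence of the scaled squared norm, and the fact that the contraction holds in expectation over the coordinate draws.
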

\begin{proof}
From~\citet{allen2016even}, it is clear that for $\mu$-strongly convex and  $L_i$-coordinate wise smooth convex function $\mathcal{S}_{D}(\alpha)$ where $\alpha \in \mathbb{R}^n$, randomized coordinate descent has the following convergence  guarantee:
\begin{align*}
D(\alpha_{k+1}) \leq  \left( 1 - \frac{\mu}{n \max_i L_i}\right)^k (\mathcal{S}_{D}(\alpha_0) - \mathcal{S}_{D}(\alpha^\star)).
\end{align*}
Here, $\mu = \frac{\gamma}{n}$. First part of the inequality directly comes from Proposition~\ref{prop:dual_sub_opt_divergence_gen} by the observation that here $\psi(\cdot)=\frac{\lambda}{2} \| \cdot\|^2$ and bregman divergence are always positive. 
\end{proof}
\begin{cor}[Accelerated Stochastic Dual Coordinate Ascent] \label{cor:result_Acc-SDCA}
Consider the regularized empirical risk minimization problem given in equation~\eqref{eq:primal_gen_erm_sdca}, then if we run Accelerated SDCA~\citep{shalev2014accelerated} algorithm starting from $\alpha_0 \in \mathbb{R}^n$, we have following convergence rate for the primal iterates:
\begin{align*}
\frac{\lambda}{2} \|\theta_{k+1} - \theta^\star \|^2 \leq D(\alpha_{k+1})  & \leq 2 \left( 1 - \frac{\sqrt{\gamma \lambda}}{\sqrt{ \max_i \| x_i\|^2}}\right)^k (\mathcal{S}_{D}(\alpha_0) - \mathcal{S}_{D}(\alpha^\star)).
\end{align*}
\end{cor}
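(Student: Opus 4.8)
The plan is to follow exactly the template of the non-accelerated case in Corollary~\ref{cor:result_SDCA}, swapping the plain randomized-coordinate-descent rate for its accelerated analogue. First I would recall the accelerated randomized coordinate descent guarantee (as in~\citet{allen2016even,lin2015accelerated}): for a function that is $\mu$-strongly convex and coordinate-wise $L_i$-smooth, accelerated RCD started at $\alpha_0$ contracts the dual suboptimality gap as
$$
{\rm gap}(\alpha_{k+1}) \;\leq\; 2\left(1 - \sqrt{\frac{\mu}{n\,\max_i L_i}}\right)^{k}(\mathcal{S}_{D}(\alpha^\star) - \mathcal{S}_{D}(\alpha_0)),
$$
where the multiplicative factor $2$ and the square-root improvement in the contraction are precisely what distinguishes acceleration from the linear rate $1 - \mu/(n\max_i L_i)$ used in Corollary~\ref{cor:result_SDCA}.

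Second, I would read off the two relevant constants of the dual objective $\mathcal{S}_{D}$ exactly as in the previous proof. By the duality theorem of~\citet{kakade2009duality} recalled above, each $\frac{1}{\gamma}$-smooth loss $\phi_i$ has a $\gamma$-strongly convex conjugate, so the term $\frac{1}{n}\sum_{i=1}^n\phi_i^*(-\alpha_i)$ is $\frac{\gamma}{n}$-strongly convex; hence $\mu = \gamma/n$. The coordinate-wise smoothness constant of the quadratic term is $L_i = \|x_i\|^2/(\lambda n^2)$. Substituting these,
$$
\frac{\mu}{n\,\max_i L_i} \;=\; \frac{\gamma/n}{\,n\cdot \max_i\|x_i\|^2/(\lambda n^2)\,} \;=\; \frac{\gamma\lambda}{\max_i\|x_i\|^2},
$$
so that $\sqrt{\mu/(n\max_i L_i)} = \sqrt{\gamma\lambda}/\sqrt{\max_i\|x_i\|^2}$, which yields the right-hand inequality of the statement.

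Finally, the left-hand inequality $\frac{\lambda}{2}\|\theta_{k+1}-\theta^\star\|^2 \leq D(\alpha_{k+1})$ is immediate from Proposition~\ref{prop:dual_sub_opt_divergence_gen}: taking $\psi(\cdot) = \frac{\lambda}{2}\|\cdot\|^2$, the associated Bregman divergence is $D_\Psi(\theta^\star,\theta(\alpha_{k+1})) = \frac{\lambda}{2}\|\theta^\star - \theta(\alpha_{k+1})\|^2$, and the proposition bounds this divergence by the dual gap ${\rm gap}(\alpha_{k+1}) = D(\alpha_{k+1})$. Combining the two inequalities gives the claim.

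The main obstacle I expect is bookkeeping rather than conceptual: I must make sure the accelerated-RCD theorem I invoke is stated with the same normalization (uniform sampling, the particular $1/n$ weighting of the finite sum, and $\max_i L_i$ rather than $\frac1n\sum_i L_i$ or an importance-sampling variant) so that the substitution produces exactly $1 - \sqrt{\gamma\lambda/\max_i\|x_i\|^2}$ with the leading constant $2$. Different accelerated coordinate descent references state the contraction with slightly different constants and sampling assumptions, so the one nontrivial check is that the cited rate matches the form claimed here; everything else is the same two-line combination used for the non-accelerated corollary.
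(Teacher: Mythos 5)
Your overall strategy --- quote an accelerated randomized coordinate descent contraction, substitute $\mu = \gamma/n$ and $L_i = \|x_i\|^2/(\lambda n^2)$, and obtain the left-hand inequality from Proposition~\ref{prop:dual_sub_opt_divergence_gen} with $\psi(\cdot) = \frac{\lambda}{2}\|\cdot\|^2$ --- is exactly the paper's, and both the left-hand inequality and the identification of the constants $\mu$ and $L_i$ are correct. The genuine gap is in the one step you yourself flagged as ``the one nontrivial check'': the accelerated rate you invoke is not the rate the cited references provide. For a $\mu$-strongly convex, coordinate-wise $L_i$-smooth function optimized by accelerated randomized coordinate descent with \emph{uniform} sampling over $n$ coordinates, the per-update contraction factor (the APCG rate of \citet{lin2015accelerated}, or \citet{allen2016even} specialized to uniform sampling) is $1 - \frac{1}{n}\sqrt{\mu/\max_i L_i} = 1 - \sqrt{\mu/(n^2 \max_i L_i)}$, and this is precisely what the paper's own proof quotes, namely $D(\alpha_{k+1}) \leq 2\bigl(1 - \frac{\sqrt{\mu}}{n\sqrt{\max_i L_i}}\bigr)^k\bigl(\mathcal{S}_{D}(\alpha_0) - \mathcal{S}_{D}(\alpha^\star)\bigr)$. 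Your version, $1 - \sqrt{\mu/(n\max_i L_i)}$, is stronger by a factor of $\sqrt{n}$ inside the contraction; no uniform-sampling accelerated coordinate method achieves it. A sanity check: your rate would let accelerated SDCA reach accuracy $\epsilon$ in $\tilde{O}\bigl(\sqrt{1/(\lambda\gamma)}\bigr)$ coordinate updates, whereas the accelerated SDCA of \citet{shalev2014accelerated} requires $\tilde{O}\bigl(n + \sqrt{n/(\lambda\gamma)}\bigr)$.

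With the correct rate, the substitution gives $\frac{\sqrt{\mu}}{n\sqrt{\max_i L_i}} = \sqrt{\gamma\lambda/(n\max_i\|x_i\|^2)}$, i.e.\ a contraction $1 - \sqrt{\gamma\lambda/(n\max_i\|x_i\|^2)}$, not the $1 - \sqrt{\gamma\lambda/\max_i\|x_i\|^2}$ claimed in the corollary: your derivation reproduces the stated bound only because the misquoted lemma exactly absorbs the missing $\sqrt{n}$. To be fair, this exposes an inconsistency in the paper itself: the paper's proof cites the correct contraction, whose substitution also yields the $\sqrt{n}$-deflated rate, so the corollary as printed appears to have dropped a factor of $n$ inside the square root. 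But as a proof of the statement as written, yours does not close the gap; you would either need a reference actually establishing the $1 - \sqrt{\mu/(n\max_i L_i)}$ rate for uniform sampling (there is none), or the statement should be corrected to $2\bigl(1 - \sqrt{\gamma\lambda/(n\max_i\|x_i\|^2)}\bigr)^k\bigl(\mathcal{S}_{D}(\alpha_0) - \mathcal{S}_{D}(\alpha^\star)\bigr)$, after which your two-line argument (and the paper's) goes through verbatim.
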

\begin{proof}
From~\citet{allen2016even}, it is clear that for $\mu$-strongly convex and $L_i$-coordinate wise smooth convex function $\mathcal{S}_{D}(\alpha)$ where $\alpha \in \mathbb{R}^n$, accelerated randomized coordinate descent has the following convergence guarantee:
\begin{align*}
D(\alpha_{k+1}) \leq  2 \left( 1 - \frac{\sqrt{\mu}}{n\sqrt{ \max_i L_i}}\right)^k (\mathcal{S}_{D}(\alpha_0) - \mathcal{S}_{D}(\alpha^\star)).
\end{align*}
First part of the inequality directly comes from Proposition~\ref{prop:dual_sub_opt_divergence_gen} by the observation that here $\psi(\cdot)=\frac{\lambda}{2} \| \cdot\|^2$ and bregman divergence are always positive. Here  $\mu = \frac{\gamma}{n}$ and $L_i = \frac{\|x_i \|^2}{ \lambda n^2}$.
\end{proof}

\paragraph{Discussion.} Let us denote duality gap at dual variable $\alpha$ as $\Delta(\alpha)$. From the definition of the duality gap $\Delta(\alpha)  =  \mathcal{S}_{P}(\theta(\alpha)) -  \mathcal{S}_{D}(\alpha)$. However, $\Delta(\alpha)$ is an upper bound on the primal sub-optimality gap as well on dual  sub-optimality gap. The main difference in the analysis presented in our work with the works of \citet{shalev2013stochastic} and \citet{shalev2014accelerated} is that the we provide the guarantee in term of the iterate. However \citet{shalev2013stochastic} and  \citet{shalev2014accelerated} provide convergence in terms of duality gap $\Delta(\alpha)$. Another main difference is that we use constant step size in each step and the output of our algorithm doesn't need averaging of the past iterates. Our analysis holds for the last iterate.

 \clearpage

\end{document}